\newtheorem{THEO}{Theorem}[section]
\newtheorem*{THEO*}{Theorem}
\newtheorem{CORO}[THEO]{Corollary}
\newtheorem{LEMM}[THEO]{Lemma}
\newtheorem{DEFI}[THEO]{Definition}
\newtheorem{REMA}[THEO]{Remark}
\newcommand{\sub}{\subseteq}
\def\({\left(}
\def\){\right)}
\def\N{\mathbb{ N}}
\def\R{\mathbb{ R}}
\def\rto{\rightarrow}
\def\CC{\mathcal{C}}
\def\PP{\mathcal{P}}
\def\1{\textbf{1}}
\def\supp{\operatorname{supp}}
\def\dens{\operatorname{dens}}
\newcommand{\Suc}[2]{\ensuremath{\left({#1}\right)_{{#2}=1}^\infty}}
\def\w{\omega}
\def\Int{\operatorname{Int}}
\DeclarePairedDelimiterX{\norm}[1]{\lVert}{\rVert}{#1}
\def\nonSN{{\rm {non}}({\mathcal {SN}})}
\def\covM{{\rm {cov}}({\mathcal M})}
\author{Gonzalo Mart\'{i}nez-Cervantes}
\address{Departamento de Matem\'{a}ticas\\ Facultad de Matem\'{a}ticas\\ Universidad de Murcia\\ 30100 Espinardo (Murcia)\\ Spain} 
\email{gonzalo.martinez2@um.es}
\subjclass[2010]{46G10, 28B05, 03E10}
\keywords{Riemann integral, Lebesgue property, weak Lebesgue property, Banach space}
\thanks{This work was supported by the research project 19275/PI/14 funded by Fundaci\'{o}n S\'{e}neca - Agencia de Ciencia y Tecnolog\'{i}a de la Regi\'{o}n de Murcia within the framework of PCTIRM 2011-2014.  This work was also supported by Ministerio de Econom\'{i}a y Competitividad and FEDER (project MTM2014-54182-P)}
\title{Riemann integrability versus weak continuity}
\begin{document}
\begin{abstract}
In this paper we focus on the relation between Riemann integrability and weak continuity.
A Banach space $X$ is said to have the weak Lebesgue property if every Riemann integrable function from $[0,1]$ into $X$ is weakly continuous almost everywhere. We prove that the weak Lebesgue property is stable under $\ell_1$-sums and obtain new examples of Banach spaces with and without this property. Furthermore, we characterize Dunford-Pettis operators in terms of Riemann integrability and provide a quantitative result about the size of the set of $\tau$-continuous non Riemann integrable functions, with $\tau$ a locally convex topology weaker than the norm topology.
\end{abstract}
\maketitle
\section{Introduction}
The study of the relation between Riemann integrability and continuity on Banach spaces started on 1927, when Graves showed in \cite{MR1501382} the existence of a vector-valued Riemann integrable function not continuous almost everywhere (a.e.~for short). Thus, the following problem arises:
\begin{enumerate}
\item[] \textit{Given a Banach space $X$, determine necessary and sufficient conditions for the Riemann integrability of a function $f:[0,1] \rto X$.}
\end{enumerate} 

A Banach space $X$ for which every Riemann integrable function $f \colon [0,1] \rto X$ is continuous a.e.~is said to have the Lebesgue property (LP for short).
All classical infinite-dimensional Banach spaces except $\ell_1$ do not have the LP.
For more details on this topic, we refer the reader to \cite{gordon1991}, \cite{Filho79}, \cite{Pizzotti89}, \cite{zbMATH03983856} and \cite{Naralenkov2007}.

Regarding weak continuity, Alexiewicz and Orlicz constructed in 1951 a Riemann integrable function which is not weakly continuous a.e.~\cite{MR0043366}. 
A Banach space $X$ is said to have the weak Lebesgue property (WLP for short) if every Riemann integrable function $f:[0,1] \rto X$ is weakly continuous a.e.
This property was introduced in \cite{Wang1996}. Every Banach space with separable dual has the WLP and the example of \cite{MR0043366} shows that $\CC([0,1])$ does not have the WLP. Other spaces with the WLP, such as $L^1([0,1])$, can be found in \cite{MR2591586} and \cite{Wang2001}. In this paper we focus on the relation between Riemann integrability and weak continuity. In Section 2 we present new results on the WLP. In particular, we prove that the James tree space $JT$ does not have the WLP (Theorem \ref{JTWLP}) and we study when $\ell_p(\Gamma)$ and $c_0(\Gamma)$ have the WLP in the nonseparable case (Theorem \ref{c0lpWLP}). Moreover, we prove that the WLP is stable under $\ell_1$-sums (Theorem \ref{l1sumaWLP}) and we apply this result to obtain that $\CC(K)^\ast$ has the WLP for every compact space $K$ in the class $MS$ (Corollary \ref{COROMS}).

Alexiewicz and Orlicz also provided in their paper an example of a weakly continuous non Riemann integrable function. 
V. Kadets proved in \cite{Kadets1994} that a Banach space $X$ has the Schur property if and only if every weakly continuous function $f:[0,1] \rto X$ is Riemann integrable. Wang and Yang extended this result in \cite{Wang2000} to arbitrary locally convex topologies weaker than the norm topology. In the last section of this paper we give an operator theoretic form of these results that, in particular, provides a positive answer to a question posed by Sofi in \cite{pre06123790}.

\subsection*{Terminology and Preliminaries}

All Banach spaces are assumed to be real. In what follows, $X^\ast$ denotes the dual of a Banach space $X$. The weak and weak$^\ast$ topologies of $X$ and $X^\ast$ will be denoted by $\w$ and $\w ^\ast$ respectively.
By an operator we mean a linear continuous mapping between Banach spaces. The Lebesgue measure in $\R$ is denoted by $\mu$. The interior of an interval $I$ will be denoted by $Int(I)$.
The density character $\dens(T)$ of a topological space $T$ is the minimal cardinality of a dense subset.

A partition of the interval $[a,b] \subset \R$ is a finite collection of non-overlapping closed subintervals covering $[a,b]$. A tagged partition of the interval $[a,b]$ is a partition $\lbrace [t_{i-1}, t_i]: 1 \leq i \leq N \rbrace $ of $[a,b]$ together with a set of points $\lbrace s_i: 1 \leq i \leq N \rbrace $ that satisfy $s_i \in (t_{i-1}, t_i)$ for each $i$.
Let $\mathcal{P}=\lbrace (s_i, [t_{i-1}, t_i]): 1\leq i \leq N \rbrace $ be a tagged partition of $[a,b]$.
For every function $f:[a,b] \rto X$, we denote by $f( \mathcal{P} )$ the Riemann sum $\sum \limits_{i=1}^N (t_i - t_{i-1})f(s_i)  $. The norm of $\mathcal{P}$ is $\norm{ \mathcal{P} } := \max \lbrace t_i - t_{i-1}: 1 \leq i \leq N \rbrace$. We say that a function $f:[a,b] \rto X$ is Riemann integrable, with integral $x \in X$, if for every $\varepsilon >0$ there is $\delta >0$ such that $\| f ( \mathcal{P} ) - x \| < \varepsilon $ for all tagged partitions $\mathcal{P}$ of $[a,b]$ with norm less than $\delta$. In this case, we write $x= \int_a^b f(t)dt$.

The following criterion will be used for proving the existence of the Riemann integral of certain functions:

\begin{THEO}[\cite{gordon1991}]

Let $f:[0,1] \rto X$. The following statements are equivalent:
\begin{enumerate}

\item The function $f$ is Riemann integrable.
\item For each $\varepsilon >0 $ there exists a partition $\mathcal{P}_\varepsilon$ of $[0,1]$ with $\|f( \mathcal{P}_1) - f(\mathcal{P}_2) \| < \varepsilon $ for all tagged partitions $\mathcal{P}_1$ and $\mathcal{P}_2$ of $[0,1]$ that have the same intervals as $\mathcal{P}_\varepsilon$.
\item There is $x \in X$ such that for every $\varepsilon >0$ there exists a partition $\mathcal{P}_\varepsilon$ of $[0,1]$ such that $\| f( \mathcal{P} )- x \| < \varepsilon $ whenever $\mathcal{P}$ is a tagged partition of $[0,1]$ with the same intervals as $\mathcal{P}_\varepsilon$.   
\end{enumerate}
\end{THEO}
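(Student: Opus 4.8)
The plan is to establish the cycle of implications $(1)\Rightarrow(3)\Rightarrow(2)\Rightarrow(1)$. Two of these are immediate. For $(1)\Rightarrow(3)$: if $f$ is Riemann integrable with integral $x$, then given $\varepsilon>0$ take $\delta>0$ from the definition and let $\mathcal{P}_\varepsilon$ be the partition of $[0,1]$ into equal subintervals of length less than $\delta$; every tagged partition with those intervals has norm less than $\delta$, so $\|f(\mathcal{P})-x\|<\varepsilon$, and the same $x$ serves for every $\varepsilon$. For $(3)\Rightarrow(2)$: given $\varepsilon>0$ apply $(3)$ with $\varepsilon/2$ to obtain $\mathcal{P}_{\varepsilon/2}$; any two tagged partitions $\mathcal{P}_1,\mathcal{P}_2$ with the intervals of $\mathcal{P}_{\varepsilon/2}$ satisfy $\|f(\mathcal{P}_1)-f(\mathcal{P}_2)\|\le\|f(\mathcal{P}_1)-x\|+\|x-f(\mathcal{P}_2)\|<\varepsilon$.

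The heart of the matter is $(2)\Rightarrow(1)$, and the main tool is the following fact (\emph{refinement stability}): if a partition $\mathcal{P}_\varepsilon$ of $[0,1]$ satisfies the Cauchy condition of $(2)$, then whenever $\mathcal{Q}_1$ and $\mathcal{Q}_2$ are tagged partitions whose interval sets both refine that of $\mathcal{P}_\varepsilon$, one has $\|f(\mathcal{Q}_1)-f(\mathcal{Q}_2)\|\le\varepsilon$. To prove this, let $I_1,\dots,I_N$ be the intervals of $\mathcal{P}_\varepsilon$, so $\sum_{j=1}^N|I_j|=1$, and put $S=\{f(\mathcal{P}):\mathcal{P}\text{ a tagged partition with intervals }I_1,\dots,I_N\}$. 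If $\mathcal{Q}$ is a tagged partition whose intervals refine $I_1,\dots,I_N$, then grouping the subintervals of $\mathcal{Q}$ lying inside each $I_j$ and using that a tag of such a subinterval lies in $\Int(I_j)$, we may write $f(\mathcal{Q})=\sum_{j=1}^N|I_j|\,c_j$ with each $c_j$ a convex combination of values $f(s)$, $s\in\Int(I_j)$; since $\sum_j|I_j|=1$, a routine manipulation of convex combinations exhibits $f(\mathcal{Q})$ as an element of $\co(S)$. As $(2)$ gives $\diam(S)\le\varepsilon$ and passing to the convex hull of a set does not enlarge its diameter in a normed space, $\diam(\co(S))\le\varepsilon$, and in particular $\|f(\mathcal{Q}_1)-f(\mathcal{Q}_2)\|\le\varepsilon$.

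Now assume $(2)$. First, $f$ is bounded: applying $(2)$ with $\varepsilon=1$ and varying a single tag at a time shows $f$ is bounded on the interior of each interval of a partition furnished by $(2)$, hence on $[0,1]$; set $M=\sup_{[0,1]}\|f\|$, the case $M=0$ being trivial. For each $n$ pick, via $(2)$, a partition $\mathcal{P}_n$ having the Cauchy property for $\varepsilon=1/n$, and let $\mathcal{R}_n$ be the common refinement of $\mathcal{P}_1,\dots,\mathcal{P}_n$, equipped with some choice of tags. For $n,n'\ge m$ both $\mathcal{R}_n$ and $\mathcal{R}_{n'}$ refine $\mathcal{P}_m$, so refinement stability gives $\|f(\mathcal{R}_n)-f(\mathcal{R}_{n'})\|\le 1/m$; hence $(f(\mathcal{R}_n))_n$ is Cauchy and, $X$ being complete, converges to some $x\in X$. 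Letting $n\to\infty$ in the same inequality, every tagged partition whose intervals refine those of $\mathcal{P}_m$ has Riemann sum within $1/m$ of $x$. Finally, given $\varepsilon>0$ choose $m$ with $1/m<\varepsilon/2$, let $N$ be the number of intervals of $\mathcal{P}_m$, and set $\delta=\varepsilon/(4NM)$. For any tagged partition $\mathcal{Q}$ with $\|\mathcal{Q}\|<\delta$, form the common refinement $\mathcal{Q}'$ of $\mathcal{Q}$ and $\mathcal{P}_m$, keeping the tags of $\mathcal{Q}$ on the intervals that are not subdivided and choosing tags arbitrarily on the at most $N-1$ intervals that are; then $\|f(\mathcal{Q})-f(\mathcal{Q}')\|\le 2(N-1)\delta M<\varepsilon/2$, while $\|f(\mathcal{Q}')-x\|\le 1/m<\varepsilon/2$ because $\mathcal{Q}'$ refines $\mathcal{P}_m$. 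Therefore $\|f(\mathcal{Q})-x\|<\varepsilon$, so $f$ is Riemann integrable with $\int_0^1 f=x$.

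I expect the refinement-stability fact to be the real obstacle: writing the Riemann sum over a refinement as a point of $\co(S)$ requires tracking the tags carefully (they stay in the interior of the parent interval) and invoking that passing to the convex hull does not increase the diameter in a normed space. Once this lemma is available, the construction of $x$ via common refinements and the closing norm-$\delta$ estimate are routine.
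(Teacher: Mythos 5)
The paper does not prove this statement: it is quoted as a known criterion with a citation to Gordon's 1991 paper, so there is no in-paper argument to compare yours against. Judged on its own, your proof is correct and complete. The implications $(1)\Rightarrow(3)\Rightarrow(2)$ are handled as they should be, and the substantive implication $(2)\Rightarrow(1)$ is carried by exactly the right lemma: writing the Riemann sum of a refinement $\mathcal{Q}$ as $\sum_j \mu(I_j)c_j$ with $c_j$ a convex combination of values $f(s)$, $s\in \Int(I_j)$ (here the paper's convention that tags lie in the \emph{open} subintervals is what guarantees the tags of $\mathcal{Q}$ stay inside the parent intervals), identifying this as a point of $\co(S)$ via $\co\bigl(\sum_j \mu(I_j)A_j\bigr)=\sum_j \mu(I_j)\co(A_j)$, and using that the convex hull does not increase diameter. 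The remaining steps — boundedness by varying one tag at a time, the Cauchy sequence of common refinements producing the candidate integral $x$, and the final mesh estimate $\|f(\mathcal{Q})-f(\mathcal{Q}')\|\le 2(N-1)\delta M$ controlling the passage from an arbitrary fine partition to a refinement of $\mathcal{P}_m$ — are all sound. This is essentially the standard proof of the Cauchy criterion for vector-valued Riemann integrals, and nothing in it needs repair.
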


We will also be concerned about cardinality. Throughout this paper, $\mathfrak{c}$ denotes the cardinality of the continuum and $\covM$ denotes the smallest cardinal such that there exist $\covM$ nowhere dense sets in $[0,1]$ whose  union is the interval $[0,1]$.
This cardinal coincides with the smallest cardinal such that there exist $\covM$ closed sets in $[0,1]$ with Lebesgue measure zero whose union does not have Lebesgue measure zero (see \cite[Theorem 2.6.14]{Bartoszynski199293}).

A set $A \subset \R$ is said to be \emph{strongly null} if for every sequence of positive reals $\Suc{\varepsilon_n}{n}$ there exists a sequence of intervals $\Suc{I_n}{n}$ such that $\mu (I_n) < \varepsilon_n$ for every $n \in \N$ and $A \subset \bigcup_{n\in \N} I_n$. We will be interested in the following result:

\begin{THEO}[\cite{Pawlikowski}]	
\label{Pawlikowski}
A set $A \subset \R$ is strongly null if and only if for every closed set $F$ with Lebesgue measure zero, the set 
$A+F = \lbrace a+z \colon a\in A \mbox{ and } z\in F \rbrace $ has Lebesgue measure zero.

\end{THEO}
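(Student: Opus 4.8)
For the first implication, assuming $A$ is strongly null and $F$ is closed with $\mu(F)=0$, I would argue as follows. Since the measure-zero ideal is closed under countable unions and $A+F=\bigcup_{k\in\N}\bigl(A+(F\cap[-k,k])\bigr)$, I may assume $F$ is compact. Fix $\varepsilon>0$. For each $n$ choose an open $U_n\supseteq F$ with $\mu(U_n)<\varepsilon 2^{-n}$; by compactness of $F$ there is $\eta_n>0$ with $F+(-\eta_n,\eta_n)\sub U_n$ (the distance from $F$ to the closed complement of $U_n$ is positive). Applying the definition of strongly null to $\Suc{\eta_n}{n}$ yields intervals $\Suc{I_n}{n}$ with $\mu(I_n)<\eta_n$ and $A\sub\bigcup_{n\in\N}I_n$. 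Each $I_n$ lies in a translate $c_n+(-\eta_n,\eta_n)$, so $I_n+F\sub c_n+(-\eta_n,\eta_n)+F\sub c_n+U_n$, a set of measure $<\varepsilon 2^{-n}$. Hence $A+F\sub\bigcup_{n\in\N}(I_n+F)$ has outer measure at most $\sum_n\varepsilon 2^{-n}=\varepsilon$, and letting $\varepsilon\to 0$ gives $\mu(A+F)=0$.

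For the converse I would prove the contrapositive: if $A$ is not strongly null, I will build a closed set $F$ with $\mu(F)=0$ but $\mu^\ast(A+F)>0$. Strong nullity is countably additive as an ideal, so some bounded piece of $A$ is not strongly null, and I may assume $A\sub(0,1)$. Next I would convert a witnessing sequence for the failure into a cleaner combinatorial form: since a witnessing sequence may be replaced by any faster one, and since covering by dyadic intervals is equivalent to covering by arbitrary intervals (up to constants), there is a rapidly increasing sequence $0=m_0<m_1<m_2<\cdots$ such that $A$ is not contained in $\bigcup_k J_k$ for any choice of dyadic intervals $J_k$ of length $2^{-m_k}$. Then I would construct $F=\bigcap_k F_k$, where $F_0=[0,1]$ and $F_k$ is obtained from $F_{k-1}$ by replacing each of its dyadic constituent intervals (of length $2^{-m_{k-1}}$) by a union of dyadic subintervals of length $2^{-m_k}$ spread through it as an evenly spaced grid — the precise choice of retained subintervals being dictated by the combinatorics below — in such a way that the proportion kept tends to $0$; this makes $F$ compact with $\mu(F)=0$, while its $k$-th grid level is tuned to the $k$-th scale $2^{-m_k}$.

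The crux, and the step I expect to be the main obstacle, is showing $\mu^\ast(A+F)>0$. The intended mechanism: if $A+F$ were null, then a conull set of points $t$ would satisfy $(t-F)\cap A=\emptyset$, i.e.\ $A\sub\R\setminus(t-F)$; feeding these constraints into the multi-scale grid structure of $F$ — one grid scale per level $m_k$ — should force a level-by-level dyadic cover of $A$ by intervals of lengths $2^{-m_k}$, contradicting the choice of $(m_k)$. Carrying this out requires a fusion/tree argument that synchronizes the grid parameters of $F$ with the cover levels, and one must also handle the technical nuisance that real translation by $t$ does not respect dyadic intervals (carries). The cleanest route, I expect, is to transport the whole problem to the Cantor group $2^\omega$ via the measure-preserving binary expansion map, where translation becomes coordinatewise addition and the dyadic structure is exactly preserved, prove the statement there, and transfer back (the countable set of dyadic rationals being negligible). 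This combinatorial core is the substance of Pawlikowski's theorem and is the part of the argument I would expect to occupy most of the work.
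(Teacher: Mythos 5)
The paper does not prove this statement at all: it is imported verbatim from Pawlikowski's work (the citation \cite{Pawlikowski} is the proof), so there is no in-paper argument to compare yours against; your attempt has to stand on its own. The easy implication is fine: your reduction to compact $F$, the choice of open $U_n\supseteq F$ with $\mu(U_n)<\varepsilon 2^{-n}$, the uniform $\eta_n$ with $F+(-\eta_n,\eta_n)\sub U_n$, and the application of strong nullity to the sequence $(\eta_n)_n$ give $\mu^\ast(A+F)\le\varepsilon$ for every $\varepsilon>0$, and this is a complete and correct proof of that direction.

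The converse, however, is where the entire content of the theorem lives, and there you have only a plan, not a proof. Two things are missing concretely. First, the construction of $F$ is not actually specified: you say the retained dyadic subintervals at each level are ``dictated by the combinatorics below,'' but no combinatorics follows, so $F$ is a family of candidate sets rather than a set. Second, and more seriously, the step you yourself identify as the crux --- deducing from ``$A\cap(t-F)=\emptyset$ for almost every $t$'' a cover of $A$ by one dyadic interval of length $2^{-m_k}$ per level $k$, contradicting the choice of $(m_k)_k$ --- is asserted to require ``a fusion/tree argument that synchronizes the grid parameters of $F$ with the cover levels,'' and that argument is never given. This synchronization is exactly Pawlikowski's lemma; without it the claim $\mu^\ast(A+F)>0$ is unsupported, and one cannot even check that a single generic translate $t$ yields a cover at one scale, let alone that the levels can be made to cooperate simultaneously. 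The proposed transfer to $2^\omega$ is a reasonable setting in which to carry this out, but naming the setting does not discharge the obligation. As written, the proposal proves one implication and correctly diagnoses, but does not close, the other.
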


We will denote by $\nonSN$ the smallest cardinal of a non strongly null set.
We have that $\aleph_1 \leq \covM \leq \nonSN \leq \mathfrak{c}$ and, 
under Martin's axiom, and therefore under the Continuum Hypothesis, $\nonSN = \covM = \mathfrak{c}$. 
Furthermore, if $\mathfrak{b}=\mathfrak{c}$ then $\nonSN = \covM$.
However, there exist models of ZFC satisfying $\covM < \nonSN$.  
For further references and results on this subject we refer the reader to \cite{bartoszynski1995set}.

\section{The weak Lebesgue property}

It is known that every Banach space with separable dual has the WLP \cite{Wang2001}. Next theorem gives a generalization in terms of $\covM$.

\begin{THEO}
\label{dualWLP}
Every Banach space $X$ such that $\dens(X^\ast) < \covM$ has the WLP. 
\end{THEO}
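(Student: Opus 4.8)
The plan is to show that if $f\colon[0,1]\rto X$ is Riemann integrable but fails to be weakly continuous on a set of positive measure, then one can produce more than $\dens(X^\ast)$ many ``obstructions'' in $X^\ast$, contradicting $\dens(X^\ast)<\covM$. First I would fix a dense subset $D\sub X^\ast$ with $|D|=\dens(X^\ast)$ and observe that $f$ is weakly continuous at $t$ if and only if $x^\ast\circ f$ is continuous at $t$ for every $x^\ast\in X^\ast$; moreover, by a standard density/equicontinuity argument (using that $f$ is bounded, being Riemann integrable), it suffices to test continuity against the countably-or-fewer functionals in $D$. Thus the set $N$ of points where $f$ is not weakly continuous is $N=\bigcup_{x^\ast\in D} N_{x^\ast}$, where $N_{x^\ast}$ is the set of discontinuity points of the scalar function $x^\ast\circ f$.

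Now each scalar function $x^\ast\circ f\colon[0,1]\rto\R$ is Riemann integrable (compose the Riemann sums), hence, by the classical Lebesgue criterion, continuous almost everywhere; so each $N_{x^\ast}$ has Lebesgue measure zero. The key classical fact I would invoke about discontinuity sets of Riemann integrable scalar functions is that $N_{x^\ast}$ is moreover an $F_\sigma$ set of measure zero, in particular a countable union of closed null sets. If $N$ had positive outer measure, then writing $N=\bigcup_{x^\ast\in D}N_{x^\ast}$ as a union of $\dens(X^\ast)$ many closed null sets (after refining the $F_\sigma$ decomposition, still $\dens(X^\ast)$ many since $\dens(X^\ast)\ge\aleph_0$), we would have exhibited $\dens(X^\ast)<\covM$ closed null sets whose union is non-null. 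By the characterization of $\covM$ recalled in the Preliminaries (\cite[Theorem 2.6.14]{Bartoszynski199293}), this is impossible. Hence $\mu^\ast(N)=0$, i.e.\ $f$ is weakly continuous almost everywhere, and $X$ has the WLP.

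The main obstacle is the reduction to a small testing family of functionals: a priori weak continuity of $f$ at $t$ requires $x^\ast\circ f$ continuous at $t$ for \emph{all} $x^\ast\in X^\ast$, not merely for $x^\ast$ in a dense set, and density in norm of $D$ does not by itself transfer pointwise continuity. The fix is to use that a Riemann integrable $f$ is norm-bounded, say $\|f\|\le M$; then for any $x^\ast$ and any $y^\ast\in D$ with $\|x^\ast-y^\ast\|<\varepsilon$ one has $|x^\ast f(s)-x^\ast f(t)|\le |y^\ast f(s)-y^\ast f(t)|+2M\varepsilon$ for all $s,t$, so the oscillation of $x^\ast\circ f$ at any point is controlled by that of the nearby $y^\ast\circ f$; letting $\varepsilon\to0$ shows $f$ is weakly continuous at $t$ precisely when $y^\ast\circ f$ is continuous at $t$ for every $y^\ast\in D$. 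I would also double-check the measurability/descriptive-set-theoretic point that the discontinuity set of a Riemann integrable real function is a countable union of closed null sets, so that the union over $D$ is genuinely a union of $\dens(X^\ast)$ closed null sets and the definition of $\covM$ applies verbatim.
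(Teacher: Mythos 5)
Your proposal is correct and follows essentially the same route as the paper's own proof: write the set of weak discontinuity points as the union over a dense set $D\subseteq X^\ast$ of the ($F_\sigma$, null) discontinuity sets of the scalar functions $x^\ast\circ f$, use $|D|=\dens(X^\ast)<\covM$ together with the characterization of $\covM$ to conclude this union is null, and handle the reduction to the testing family $D$ via boundedness of $f$ and the $3\varepsilon$-estimate. The only blemish is the stray phrase ``countably-or-fewer functionals in $D$'' (in general $|D|=\dens(X^\ast)$ need not be countable), but your subsequent counting is done correctly with $\dens(X^\ast)$ many sets, so this does not affect the argument.
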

\begin{proof}
Let $D=\lbrace x_i^\ast \rbrace _{i \in I}$ be a dense subset in $X^\ast$ with $|I| < \covM$ and take 
$f:[0,1] \rto X$ a Riemann integrable function. Then every function $x_i^\ast f$ is Riemann integrable. Let $E_i$ be the set of points of discontinuity of $x_i^\ast f$ for every $i\in I$. Each $E_i$ is a countable union of closed sets with measure zero, so $E= \bigcup_{i \in I} E_i $ has measure zero since $|I|< \covM$. 
We claim that $f$ is weakly continuous at every point of $E^c$. Let $x^\ast \in X^\ast$ and let $M$ be an upper bound for $\lbrace \| f(t) \| : t \in [0,1] \rbrace$. Fix $\varepsilon >0$ and $t \in E^c$. Then, there exists $x_i ^\ast \in D $ such that $\| x_i ^\ast - x^\ast \| < \frac{ \varepsilon }{3M}$.  
Since $t \notin E_i$, there exists a neighbourhood $U$ of $t$ such that $| x_i^\ast f (t) - x_i ^\ast f(t')| < \frac{\varepsilon}{3}$ for every $t' \in U$. Thus, 
$$| x^\ast f (t) - x^\ast f(t') | \leq | x^\ast f (t) - x_i^\ast f (t) | + | x_i^\ast f (t) - x_i^\ast f (t')| + | x_i^\ast f (t')- x^\ast f(t') | < \varepsilon $$
for every $t' \in U$.
\end{proof}

\begin{CORO}

Every Banach space with separable dual has the WLP.

\end{CORO}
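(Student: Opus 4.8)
The plan is to read this off Theorem \ref{dualWLP} by specializing to the countable case. If $X$ has separable dual, then $X^\ast$ carries a countable dense subset, so $\dens(X^\ast)\le\aleph_0$; since the excerpt already records the ZFC inequality $\aleph_1\le\covM$, we get $\dens(X^\ast)\le\aleph_0<\aleph_1\le\covM$, and Theorem \ref{dualWLP} applies directly to conclude that $X$ has the WLP.

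I do not expect any real obstacle here: the strict inequality $\dens(X^\ast)<\covM$ required by the theorem holds in every model of ZFC, because $\covM$ is uncountable while a separable space has countable density character, so no extra set-theoretic assumption enters. If a reader prefers a self-contained derivation, I would simply rerun the proof of Theorem \ref{dualWLP} with the index set taken countable: for a Riemann integrable $f\colon[0,1]\rto X$ and a countable dense set $\{x_i^\ast\}_{i\in\N}$ in $X^\ast$, each $x_i^\ast f$ is Riemann integrable and hence continuous almost everywhere by the classical scalar Lebesgue criterion, so the union $E$ of the discontinuity sets is a countable union of Lebesgue-null sets and thus null, and the same $3\varepsilon$-argument — using the density of $\{x_i^\ast\}$ together with a uniform bound $M$ for $\|f\|$ — shows that $f$ is weakly continuous at each point of $E^c$.
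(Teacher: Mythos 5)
Your proposal is correct and matches the paper's intent exactly: the corollary is stated as an immediate consequence of Theorem \ref{dualWLP}, using precisely the observation that a separable dual gives $\dens(X^\ast)=\aleph_0<\aleph_1\le\covM$ in every model of ZFC. Your optional self-contained rerun of the $3\varepsilon$-argument in the countable case is also just the theorem's own proof specialized, so there is nothing to add.
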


The space $\ell_1$ has the WLP because it has the LP. Since every asymptotic $\ell_1$ space has the LP \cite{Naralenkov2007}, the space $\Lambda_T$ defined by Odell in \cite{Odell1985} is a separable Banach space with nonseparable dual such that it does not contain an isomorphic copy of $\ell_1$ but it has the WLP (it is asymptotic $\ell_1$). On the other hand, the James tree space $JT$ (see \cite[Section 13.4]{opac-b1119539}) is a separable Banach space with nonseparable dual such that it does not contain an isomorphic copy of $\ell_1$ and  it does not have the WLP:

\begin{THEO}
\label{JTWLP}
The James tree space does not have the WLP.
\end{THEO}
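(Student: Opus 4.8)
The plan is to construct explicitly a Riemann integrable function $f \colon [0,1] \rto JT$ that fails to be weakly continuous on a set of positive measure, imitating the Alexiewicz--Orlicz mechanism but adapted to the tree structure of $JT$. Recall that $JT$ is built on the dyadic tree $2^{<\omega}$: a norm-one vector $e_\nu$ is attached to each node $\nu$, and the norm of a finitely supported vector is computed by taking a supremum of $\ell_2$-sums along finitely many pairwise disjoint segments (branches) of the tree. The key structural feature I would exploit is that $JT^\ast$ is nonseparable precisely because the branches $\beta \in 2^\omega$ give rise to ``$w^\ast$-many'' functionals, and weak continuity of a $JT$-valued function forces simultaneous control along \emph{all} branches, whereas Riemann integrability only constrains the averaged behaviour.

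First I would fix a bijective enumeration of the nodes compatible with levels, and for each dyadic level $n$ partition $[0,1]$ into the $2^n$ dyadic subintervals $I_\nu$, $|\nu| = n$, assigning to the interior of $I_\nu$ the value (a suitable scalar multiple of) $e_\nu$. Concretely, define $f$ on $[0,1]$ by letting $f$ take the value $c_n e_\nu$ on a small subinterval centred in $I_\nu$ for $|\nu| = n$ and $0$ elsewhere, with the scalars $c_n$ and the lengths $\ell_n$ of these subintervals chosen so that $\sum_n 2^n \ell_n < \infty$ (this makes the Riemann sums controllable via criterion (2) of the cited Theorem, since on any sufficiently fine partition only finitely many ``active'' levels contribute and their total length is small) while at the same time $c_n$ does not go to zero. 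Then for every branch $\beta = (\nu_n)_n$ of the tree and the associated branch functional $b_\beta \in JT^\ast$ with $b_\beta(e_{\nu_n}) = 1$, the composition $b_\beta f$ oscillates by at least $\liminf_n c_n > 0$ at the point $t_\beta = \bigcap_n I_{\nu_n}$, because arbitrarily close to $t_\beta$ the function hits the value $c_n e_{\nu_n}$ (giving $b_\beta f = c_n$) and also the value $0$. Since the map $\beta \mapsto t_\beta$ from $2^\omega$ onto $[0,1]$ is measure-preserving (up to the countable set of dyadic rationals), the set of $t$ at which $f$ is not weakly continuous has full measure.

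The part I expect to be the main obstacle is verifying Riemann integrability rigorously, i.e. checking criterion (2): I must show that for each $\varepsilon > 0$ there is a partition $\PP_\varepsilon$ such that any two taggings agreeing with $\PP_\varepsilon$ give Riemann sums within $\varepsilon$ in the $JT$-norm. The subtlety is that the difference of two such Riemann sums is a finitely supported vector supported on the nodes of the finitely many active levels, and bounding its $JT$-norm requires estimating a supremum over disjoint segments of $\ell_2$-sums of the coefficients $\pm c_n (t_i - t_{i-1})$; one has to argue that because within a single interval $[t_{i-1},t_i]$ of $\PP_\varepsilon$ at most one active node per level is hit, any segment of the tree meets the support in at most one node per level, so the relevant $\ell_2$-sum is dominated by $\bigl(\sum_n c_n^2 \ell_n^2 \cdot (\text{number of intervals})\bigr)^{1/2}$-type quantities, which I arrange to be small by the choice of $\ell_n$. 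Once the integrability estimate is in place, the failure of a.e.\ weak continuity is the easy direction described above, and choosing, say, $c_n = 1$ and $\ell_n = 4^{-n}$ should make all the estimates go through.
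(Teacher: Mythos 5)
Your overall strategy is the same as the paper's (a function supported near the dyadic points, sending the point associated with a node $\nu$ to $e_\nu$, Riemann integrable by a $JT$-norm estimate and weakly discontinuous at each branch point via the branch functionals), but there are two genuine gaps in the execution. First, the function is not well defined as stated: any subinterval of positive length $\ell_n$ centred at the midpoint of $I_\nu$ contains the midpoints of infinitely many deeper dyadic intervals, hence meets the subintervals assigned to infinitely many descendants of $\nu$, so the prescription ``$f=c_ne_\nu$ on the subinterval centred in $I_\nu$'' assigns conflicting values on a dense open set (and with $\ell_n=4^{-n}$ one even has $\sum_n 2^n\ell_n=2>1$, so ``$0$ elsewhere'' may be vacuous, which also undermines the discontinuity argument at points $t_\beta$ lying in the support of a node of their own branch, where $b_\beta f(t_\beta)=1=\lim_n b_\beta f(s_n)$). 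The paper avoids all of this by supporting $f$ only at the single dyadic points $\frac{2k-1}{2^n}$ themselves.

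Second, and more seriously, the integrability estimate --- which you correctly identify as the main obstacle --- is not actually established, and the bound you sketch rests on a misreading of the $JT$ norm. The observation that ``any segment meets the support in at most one node per level'' is vacuous (a segment contains at most one node per level by definition) and does not yield an $\ell_2$-of-coefficients bound of the form $\bigl(\sum_n c_n^2\ell_n^2\cdot N\bigr)^{1/2}$: the $JT$ norm first sums the coefficients along a segment and only then squares, so a single segment passing through one active node on each of $m$ levels, all carrying same-sign coefficients $c_n(t_i-t_{i-1})$, contributes $\bigl(\sum c_n(t_i-t_{i-1})\bigr)^2$, which is $m^2$ times larger than the $\ell_2$ bound when the terms are comparable. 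This coherent summation along branches is precisely the difficulty, and controlling it requires a geometric argument. The paper handles it by fixing $N$, enclosing the level-$\le N$ dyadic points in intervals $I_1,\dots,I_{2^N-1}$ of total length $\le 2^{-N}$ (whose contribution is bounded crudely by that total length), and observing that points lying in distinct complementary intervals $J_1,\dots,J_{2^N}$ correspond to pairwise \emph{incomparable} nodes of $T$ --- so for that part no coherent summation can occur and $\norm{\sum a_nf(t_n)}\le\sqrt{\sum a_n^2}$ holds honestly, giving $\norm{f(\mathcal{P}_N)}\le 2\cdot 2^{-N/2}$. Your proposal needs either this incomparability mechanism or an explicit substitute for it; as written, the central estimate is asserted rather than proved, and the asserted form of the bound is not valid in $JT$.
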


\begin{proof}

We represent the dyadic tree by $$T=\lbrace (n,k): n=0,1,2,\dots \mbox{ and } k=1,2,\dots,2^ {n}\rbrace .$$ A node $(n,k) \in T$ has two immediate successors $(n+1, 2k-1)$ and $(n+1,2k)$. Then, a segment of $T$ is a finite sequence $\lbrace p_1,\dots,p_m \rbrace$ such that $p_{j+1}$ is an immediate successor of $p_j$ for every $j=1,2,\dots,m-1$. The James tree space $JT$ is the completion of $c_{00}(T)$ with the norm
$$\| x \| =\sup \sqrt{ \sum_{j=1}^l \( \sum_{(n,k)\in S_j} x(n,k) \)^2 } < \infty ,$$
where the supremum is taken over all $l\in \N$ and all sets of pairwise disjoint segments $S_1, S_2,\dots, S_l$.
Let $\lbrace e_{(n,k)} \rbrace _{(n,k) \in T}$ be the canonical basis of $JT$, i.e. $e_{(n,k)}$ is the characteristic function of $(n,k) \in T$.
Define $f:[0,1] 	\rto JT$ as follows:
$$ f (t) = \begin{cases}
\ e_{(n-1,k)} & \mbox{ if } t = \frac{ 2k-1}{2^n} \mbox{ with } n\in \N \mbox{ and } k=1,2,\dots,2^{n-1} \\
\ 0  & \mbox{ in any other case.}
\end{cases}
$$
We claim that $f$ is Riemann integrable. 
Fix $N \in \N$ and let $\lbrace I_1, I_2,\dots, I_{2^N -1} \rbrace $ be a family of closed disjoint intervals of  $[0,1]$ with $$ \sum_{1\leq n \leq 2^N-1} \mu (I_n) \leq \frac{1}{2^N} \mbox{ and } \frac{n}{2^N} \in Int(I_n)  \mbox{ for each } 1 \leq n \leq 2^{N}-1.$$
Let $ J_1, J_2, \dots , J_{2^N}$ be the closed disjoint intervals of $[0,1]$ determined by $$[0,1] \setminus \bigcup \limits_{1 \leq n \leq 2^N-1} \Int(I_n).$$
Then, $\mu(J_n) \leq \frac{1}{2^N}$ and $\| \sum_{n=1}^{2^N} a_n f(t_n) \| \leq \sqrt{\sum_{n=1}^{2^N} a_n ^2 } $ for every $a_n \in \R$ and every $t_n \in J_{n} $ due to the definition of the norm in $JT$.
Thus, any tagged partition $\mathcal{P}_N$ with intervals $ J_1, I_1, J_2, \dots, I_{2^N -1}, J_{2^N}$ and points $t_1, t_1 ', t_2, \dots,t_{2^N-1}', t_{2^N}$ satisfies
$$\| f( \mathcal{P}_N ) \| \leq \norm[\bigg]{ \sum_{n=1}^{2^N} \mu (J_n) f(t_{2n-1}) } +   \sum_{n=1}^{2^N-1}  \mu (I_n) \leq $$
$$\leq \sqrt{\sum_{n=1}^{2^N} \mu (J_n)^2 } + \frac{1}{2^N} \leq \sqrt{\sum_{n=1}^{2^{N}} \frac{1}{2^{2N}} } + \frac{1}{2^N} \leq  \frac{2}{\sqrt{2^N}}.$$
Hence, $\| f( \mathcal{P}_N ) \| \xrightarrow{N \rto \infty } 0$ and $f$ is Riemann integrable with integral zero.

We show that $f$ is not weakly continuous at any irrational point $t \in [0,1]$.  
Fix a irrational point $t\in [0,1]$. There exists a sequence of dyadic points $\Suc{ \frac{ 2k_j-1}{2^{n_j}} }{j}$ converging to $t$ with $\Suc{n_j-1, k_j}{j} $ a sequence in $T$ such that $(n_{j+1}-1, k_{j+1})$ is an immediate successor of $(n_j-1, k_j)$ for every $j\in \N$.
Then, $ \sum_{j=1}^\infty e_{(n_j-1, k_j)}^\ast$ is a functional in $ JT^\ast$, so the sequence $f(\frac{ 2k_j-1}{2^{n_j}}) = e_{(n_j-1, k_j )}$ is not weakly null and $f$ is not weakly continuous at $t$.  
\end{proof}

\begin{CORO}[\cite{MR0043366}]

\label{C[0,1]noWLP}
$\CC([0,1]) $ does not have the WLP.

\end{CORO}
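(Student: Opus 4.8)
The plan is to derive this directly from Theorem \ref{JTWLP}, using the fact that the WLP passes to closed subspaces together with the universality of $\CC([0,1])$. First I would record the subspace stability: if $Y$ is a closed subspace of a Banach space $X$ and $X$ has the WLP, then so does $Y$. Indeed, given a Riemann integrable $f\colon[0,1]\rto Y$, let $j\colon Y\hookrightarrow X$ be the inclusion; the Riemann sums of $j\circ f$ are the images under $j$ of those of $f$, so $j\circ f$ is Riemann integrable as a map into $X$ (with integral $j\bigl(\int_0^1 f(t)\,dt\bigr)$). By hypothesis $j\circ f$ is then weakly continuous a.e.\ in $X$; but every $y^\ast\in Y^\ast$ extends by Hahn--Banach to some $x^\ast\in X^\ast$ with $x^\ast\circ j\circ f=y^\ast\circ f$, so weak continuity of $j\circ f$ at a point $t$ yields weak continuity of $f$ at $t$. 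Hence $f$ is weakly continuous a.e., as desired.

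Granting this, the corollary follows immediately: by the Banach--Mazur theorem every separable Banach space, and in particular $JT$, is isometric to a closed subspace of $\CC([0,1])$; so if $\CC([0,1])$ had the WLP, then $JT$ would too, contradicting Theorem \ref{JTWLP}.

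The only small points to check — none of them a genuine obstacle — are that the function $f$ constructed in the proof of Theorem \ref{JTWLP} is bounded (its values lie among the vectors $e_{(n,k)}$ and $0$), so its composition with the embedding is again a bounded Riemann integrable function, and that the set of points of weak discontinuity exhibited there, namely all irrational points of $[0,1]$, has full Lebesgue measure, which is exactly the failure of the WLP. Alternatively one could transcribe the original Alexiewicz--Orlicz construction of \cite{MR0043366} directly into $\CC([0,1])$, but the subspace argument is shorter now that Theorem \ref{JTWLP} is available.
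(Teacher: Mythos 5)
Your proposal is correct and follows exactly the paper's own argument: the WLP is inherited by closed subspaces, and since $JT$ is separable it embeds isometrically into $\CC([0,1])$ by the Banach--Mazur theorem, so Theorem \ref{JTWLP} gives the conclusion. The extra detail you supply (the Hahn--Banach step justifying the subspace stability) is a correct elaboration of a fact the paper uses without proof.
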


\begin{proof}

Since every subspace of a Banach space with the WLP has the WLP and every separable Banach space is isometrically isomorphic to a subspace of $\CC([0,1]) $, it follows from the previous theorem and the separability of $JT$ that $\CC([0,1])$ does not have the WLP. 
\end{proof}

\begin{CORO}
Let $K$ be a compact Hausdorff space.
\begin{enumerate}
\item If $K$ is metrizable, then $\CC(K)$ has the WLP if and only if $K$ is countable. 
\item If $\CC(K)$ has the WLP then $K$ is scattered. The converse is not true since $c_0 (\mathfrak{c})$ does not have the WLP (Theorem \ref{c0lpWLP}) and it is isomorphic to a $\CC(K)$ space with $K$ scattered.
\end{enumerate}
\end{CORO}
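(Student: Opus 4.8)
The idea is to deduce both parts from Theorem \ref{JTWLP} (via Corollary \ref{C[0,1]noWLP}), from the hereditary nature of the WLP under subspaces, and from Theorem \ref{c0lpWLP} together with standard facts about $\CC(K)$ spaces.

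For part (1): if $K$ is countable (and metrizable compact, hence scattered of countable height), then $\CC(K)$ has separable dual — indeed $\CC(K)^\ast = \ell_1(K)$ which is separable when $K$ is countable — so $\CC(K)$ has the WLP by the Corollary after Theorem \ref{dualWLP}. Conversely, if $K$ is metrizable and uncountable, then by the Cantor–Bendixson theorem $K$ contains a homeomorphic copy of the Cantor set, so $\CC([0,1])$ (equivalently $\CC(2^\N)$) embeds isometrically into $\CC(K)$; since $\CC([0,1])$ fails the WLP by Corollary \ref{C[0,1]noWLP} and the WLP passes to subspaces, $\CC(K)$ fails the WLP. (Alternatively: an uncountable compact metric space maps continuously onto $[0,1]$, which again gives the isometric embedding.)

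For part (2): if $K$ is not scattered, then $K$ admits a continuous surjection onto $[0,1]$, hence $\CC([0,1])$ embeds isometrically into $\CC(K)$, so $\CC(K)$ fails the WLP; contrapositively, the WLP forces $K$ scattered. For the failure of the converse, invoke Theorem \ref{c0lpWLP}, which gives that $c_0(\mathfrak{c})$ does not have the WLP, together with the classical fact that $c_0(\Gamma)$ is isometrically isomorphic to $\CC(K)$ for $K = \Gamma \cup \{\infty\}$ the one-point compactification of the discrete set $\Gamma$ (here $|\Gamma| = \mathfrak{c}$), and such a $K$ is scattered (of Cantor–Bendixson height $2$). So scatteredness of $K$ does not imply the WLP for $\CC(K)$.

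The only real content beyond bookkeeping is the two topological input facts: that an uncountable compact metric space contains a copy of the Cantor set, and that a non-scattered compact Hausdorff space surjects continuously onto $[0,1]$ (equivalently, $\CC(K)$ contains an isometric copy of $\CC([0,1])$ when $K$ is not scattered) — both are standard, the latter being a theorem of Pe\l czy\'nski–Semadeni. The main obstacle is simply making sure the embedding is isometric (so that the WLP genuinely transfers) rather than merely isomorphic; but an isometric copy of $\CC([0,1])$ inside $\CC(K)$ is exactly what a continuous surjection $K \twoheadrightarrow [0,1]$ provides via composition, so no difficulty arises.
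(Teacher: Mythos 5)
Your proposal is correct, and for most of the statement it follows the paper's own route: the countable case via separability of $\CC(K)^\ast$ and Theorem \ref{dualWLP}, and the scatteredness claim via an embedded copy of $\CC([0,1])$ combined with Corollary \ref{C[0,1]noWLP} and the fact that the WLP passes to subspaces. The one genuine divergence is the uncountable metrizable case: the paper invokes Milutin's theorem ($\CC(K)$ is \emph{isomorphic} to $\CC([0,1])$ for every uncountable compact metric $K$), whereas you produce a continuous surjection $K\twoheadrightarrow[0,1]$ (via a Cantor set inside $K$, or directly) and compose to embed $\CC([0,1])$ into $\CC(K)$. Your route is more elementary -- it needs only Cantor--Bendixson plus Tietze rather than Milutin -- and it also makes visible that the uncountable metrizable case is really a special case of part (2), since a scattered compact metrizable space is countable; the paper's route is shorter to cite but uses a much heavier theorem. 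Two small inaccuracies worth noting, neither of which affects the argument: your worry about the embedding being isometric is unnecessary, since both Riemann integrability and weak continuity are isomorphic invariants, so an isomorphic embedding transfers the failure of the WLP just as well (and this is all the paper uses); and $c_0(\Gamma)$ is a hyperplane in $\CC(\Gamma\cup\{\infty\})$, hence isomorphic but not canonically isometric to it -- again isomorphism is all that is needed, and is all the statement claims.
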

\begin{proof}
If $K$ is a countable compact metric space, then $\CC(K)^\ast$ is separable \cite[Theorem 14.24]{fabian2011banach}, so $\CC(K)$ has the WLP (Theorem \ref{dualWLP}).
If $K$ is an uncountable compact metric space, then $\CC(K)$ is isomorphic to $\CC([0,1])$ \cite[Theorem 4.4.8]{opac-b1119539}, so $\CC(K)$ does not have the WLP (Corollary \ref{C[0,1]noWLP}).
Finally, if $K$ is not scattered, then $\CC(K)$ has a subspace isomorphic to $\CC([0,1])$ (see the proof of \cite[Theorem 14.26]{fabian2011banach}), so $\CC(K)$ does not have the WLP.
\end{proof}

\begin{REMA}
\label{remark}
Let $\lbrace X_i \rbrace_{i \in \Gamma} $ be a family of Banach spaces. Define $X:=(\bigoplus_{i \in \Gamma} X_i )_{\ell_p} $ with $1<p<\infty$ or $X:=(\bigoplus_{i \in \Gamma} X_i )_{c_0} $. If $f:[0,1] \rto X$ is a bounded function, then its set of points of weak discontinuity is
$E= \bigcup_{i \in \Gamma} E_i$, where each $E_i$ is the set of points of weak discontinuity of $f_i$ and $f_i$ is the i'th cordinate of $f$.
Thus, the countable $\ell_p$-sum or $c_0$-sum of Banach spaces with the WLP has the WLP.
We cannot extend this result to uncountable $\ell_p$-sums or $c_0$-sums even when $X_i=\R$ for every $i \in \Gamma$ (Theorem \ref{c0lpWLP}).

\end{REMA}

Now, we study the WLP for the spaces of the form $c_0(\kappa)$ and $\ell_p(\kappa)$ with $\kappa$ a cardinal.

\begin{THEO}
For any cardinal $\kappa$ and any $1<p<\infty$, $c_0(\kappa)$ has the WLP if and only if $\ell_p(\kappa)$ has the WLP.
\label{Teosii}
\end{THEO}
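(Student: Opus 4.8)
The plan is to prove both implications by contraposition, relying throughout on Remark \ref{remark}: for a bounded function $h$ into $c_0(\kappa)$ or into $\ell_p(\kappa)$, its set of points of weak discontinuity is $\bigcup_{i<\kappa}\operatorname{disc}(h_i)$, where $h_i=e_i^\ast h$ is the $i$-th coordinate function, which is scalar and Riemann integrable, so each $\operatorname{disc}(h_i)$ is Lebesgue null.

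The implication ``$c_0(\kappa)$ has the WLP $\Rightarrow$ $\ell_p(\kappa)$ has the WLP'' is the easy one. If $g\colon[0,1]\to\ell_p(\kappa)$ is Riemann integrable, then, since $\|\cdot\|_\infty\le\|\cdot\|_p$, the inclusion $\ell_p(\kappa)\hookrightarrow c_0(\kappa)$ is a norm-one operator and hence $g$ is also Riemann integrable when viewed as a $c_0(\kappa)$-valued function; by Remark \ref{remark} it has the same set of points of weak discontinuity in both spaces, so a counterexample to the WLP for $\ell_p(\kappa)$ is also one for $c_0(\kappa)$.

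For the converse I would start from a Riemann integrable $f\colon[0,1]\to c_0(\kappa)$; it is bounded, and after normalizing we may assume $\|f(t)\|_\infty\le1$. Suppose its set of points of weak discontinuity $\bigcup_i\operatorname{disc}(f_i)$ is non-null. Writing $E_i^{(K)}=\{t:\operatorname{osc}(f_i,t)\ge1/K\}$ for the set where the oscillation of $f_i$ is at least $1/K$ --- a closed null set, hence nowhere dense --- we have $\bigcup_i\operatorname{disc}(f_i)=\bigcup_K\bigcup_i E_i^{(K)}$, so some $K$ makes $E':=\bigcup_i E_i^{(K)}$ have positive outer measure; fix it. The heart of the matter is a uniform smallness estimate extracted from the Riemann integrability of $f$: given $\varepsilon>0$, use the criterion quoted in the preliminaries to pick a partition with intervals $J_1,\dots,J_N$ such that every tagged partition with these intervals has Riemann sum within $\varepsilon$ of $\int f$; then
$$\sum\bigl\{\,|J_n| : \operatorname{int}(J_n)\cap E_i^{(K)}\neq\emptyset\,\bigr\}\le 4K\varepsilon\qquad\text{for every }i<\kappa.$$
This follows from the standard two-tag comparison: on each interval $J_n$ meeting $E_i^{(K)}$ in its interior, choose interior tags $s_n^{+},s_n^{-}$ with $f_i(s_n^{+})-f_i(s_n^{-})\ge 1/(2K)$; the $i$-th coordinates of the two Riemann sums obtained by using $s_n^{+}$, respectively $s_n^{-}$, on those intervals (and common tags elsewhere) differ by at most $2\varepsilon$ but by at least $\tfrac1{2K}$ times the sum of those $|J_n|$. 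I expect this to be the main obstacle: one must turn the single hypothesis ``the Riemann sums of $f$ converge in the supremum norm of $c_0(\kappa)$'' into a bound, uniform over all $\kappa$ coordinates, on how much of $[0,1]$ lies near each oscillation set $E_i^{(K)}$.

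With this estimate available, well-order $\kappa$, set $D_i=E_i^{(K)}\setminus\bigcup_{j<i}E_j^{(K)}$ (so the $D_i$ are pairwise disjoint, $D_i\subseteq E_i^{(K)}$, and $\bigcup_i D_i=E'$), and define $g\colon[0,1]\to\ell_p(\kappa)$ by $g(t)=e_i$ if $t\in D_i$ and $g(t)=0$ otherwise. Then $g$ is Riemann integrable with integral $0$: for a tagged partition with the intervals $J_1,\dots,J_N$ chosen above, the $i$-th coordinate of its Riemann sum equals the sum of $|J_n|$ over the intervals tagged inside $D_i$, hence --- such an interval meeting $E_i^{(K)}$ in its interior --- at most $4K\varepsilon$; since these coordinates add up to at most $\sum_n|J_n|=1$, the Riemann sum has $\ell_p$-norm at most $(4K\varepsilon)^{(p-1)/p}$, which tends to $0$ with $\varepsilon$ because $1<p<\infty$, and the criterion again applies. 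Finally, $g$ is not weakly continuous at any $t\in E'$: if $t\in D_i$ then the scalar function $e_i^\ast g$ equals $1$ at $t$ and $0$ on $[0,1]\setminus E_i^{(K)}$, a dense set, so it is discontinuous at $t$. Since $E'$ is non-null, $g$ witnesses that $\ell_p(\kappa)$ fails the WLP, which completes the argument.
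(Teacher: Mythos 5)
Your proposal is correct and follows essentially the same route as the paper: the easy direction via the norm-one inclusion $\ell_p(\kappa)\hookrightarrow c_0(\kappa)$ together with Remark \ref{remark}, and the converse via coordinate oscillation sets, transfinite disjointification, the two-tag comparison yielding a bound on $\sum\{\mu(J_n):\operatorname{Int}(J_n)\cap E_i^{(K)}\neq\emptyset\}$ uniform in $i$, and the estimate $\sum_i c_i^p\le(\max_i c_i)^{p-1}\sum_i c_i$ to get the $\varepsilon^{(p-1)/p}$ bound. The only (harmless) difference is that you build the $\ell_p(\kappa)$-valued witness directly from the uniform estimate on $f$, whereas the paper first passes through an intermediate $c_0(\kappa)$-valued characteristic-function map and then transfers the estimate to its $\ell_p$ analogue.
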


\begin{proof}

If $\ell_p (\kappa)$ does not have the WLP, then there exists a Riemann integrable function $f:[0,1] \rto \ell_p (\kappa)$ which is not weakly continuous a.e. If $I: \ell_p (\kappa) \rto c_0(\kappa)$ is the canonical inclusion, then the function $I \circ f$ is weakly continuous at a point $t \in [0,1] $ if and only if $f$ is weakly continuous at $t$ by Remark \ref{remark}. Therefore, $I \circ f$ is not weakly continuous a.e.
Since $I$ is an operator, $I \circ f$ is also Riemann integrable. Thus, $c_0( \kappa)$ does not have the WLP.

To prove the other implication, suppose $c_0 (\kappa)$ does not have the WLP. Then, there exists a Riemann integrable function $f:[0,1] \rto c_0 ( \kappa )$ which is not weakly continuous a.e. 
Let $f_\alpha$ be the $\alpha$'th cordinate of $f$ for every $\alpha \in \kappa$ and $E_\alpha^n$ be the set of points where $f_\alpha$ has oscillation strictly bigger than $\frac{1}{n}$ for every $n \in \N$. Note that each $E_\alpha^n$ has Lebesgue measure zero. Since $f$ is not weakly continuous a.e., $\bigcup_{\alpha < \kappa} \( \bigcup_{n \in \N} E_\alpha ^n \)$ has not Lebesgue measure zero, so there exists $n \in \N$ such that $\bigcup_{\alpha < \kappa} E_\alpha ^n$ has not Lebesgue measure zero.

Set $F_0 := E_0^n$ and $F_\alpha := E_\alpha ^n \setminus \( \bigcup_{\beta < \alpha } E_\beta^n \)$ for every $\alpha \in \kappa \setminus \lbrace 0 \rbrace$. The sets $F_\alpha$ are pairwise disjoint. Let $\chi_{F_\alpha} \colon [0,1] \rto \lbrace 0,1 \rbrace$ be the characteristic function of $F_\alpha$ for every $\alpha < \kappa$ and $g \colon [0,1] \rto c_0 (\kappa) $ the function defined by the formula
$g(t)= \sum_{\alpha < \kappa} \chi_{F_\alpha} (t) e_\alpha $ for every $t \in [0,1]$,
where $\lbrace e_\alpha \rbrace _{\alpha < \kappa}$ is the canonical basis of $X$. 

Notice that $g$ is not weakly continuous a.e.~since each $\chi_{F_\alpha}$ is not continuous at any point of $F_\alpha$ (because $\mu(F_\alpha)=0$) and $\bigcup_{\alpha < \kappa} F_\alpha= \bigcup_{\alpha < \kappa} E_\alpha ^n$ is not Lebesgue null. 
We claim that $g$ is Riemann integrable. Let $\varepsilon >0$. Since $f$ is Riemann integrable, there exists a partition $\mathcal{P}_\varepsilon$ of $[0,1]$ such that $\|f( \mathcal{P}_1) - f(\mathcal{P}_2) \| < \frac{\varepsilon}{n} $ for all tagged partitions $\mathcal{P}_1$ and $\mathcal{P}_2$ of $[0,1]$ that have the same intervals as $\mathcal{P}_\varepsilon$. 
For every $\alpha < \kappa$ and any tagged partitions $\mathcal{P}_1$ and $\mathcal{P}_2$ of $[0,1]$ that have the same intervals as $\mathcal{P}_\varepsilon$, 
$$ | \chi_{F_\alpha} ( \mathcal{P}_1 ) - \chi_{F_\alpha} ( \mathcal{P}_2 ) | \leq  \sum_{i=1}^N \mu (I_i)  \leq 
n | f_\alpha( \mathcal{P}_1') - f_\alpha ( \mathcal{P}_2' ) | \leq n \|f( \mathcal{P}_1') - f(\mathcal{P}_2') \| < \varepsilon $$
for suitable tagged partitions  $\mathcal{P}_1'$ and $\mathcal{P}_2'$ of $[0,1]$ with the same intervals as $\mathcal{P}_\varepsilon$, where $I_1, I_2,\dots, I_N$ are the intervals of $\mathcal{P}_\varepsilon $ whose interior has non-empty intersection with $E_\alpha^n$.

Therefore, $g$ is Riemann integrable. Let $h:[0,1] \rto \ell_p (\kappa)$ be the function given by the formula
$h(t)=\sum_{\alpha < \kappa} \chi_{F_\alpha} (t) e_\alpha $. Since the sets $F_\alpha$ are pairwise disjoint, the function $h$ is well-defined. Moreover, $h$ is not weakly continuous a.e. because $I \circ h= g$. Set $F=\bigcup_{\alpha < \kappa} F_\alpha $ and $\phi : F \rto \kappa $ such that $\phi (t) = \alpha $ if $t \in F_\alpha$.
We claim that $h$ is Riemann integrable with integral zero. Let $\varepsilon >0$ and $\mathcal{P}_\varepsilon = \lbrace I_1, I_2, \dots, I_M \rbrace$ be a partition of $[0,1]$ such that $\| g( \mathcal{P}') \| < \varepsilon $ for any tagged partition $\mathcal{P}'$ of $[0,1]$ with the same intervals as $\mathcal{P}_\varepsilon$. Notice that 
\begin{equation} \label{eq1}
\mu \Biggl( \bigcup_{\substack{\Int (I_i)\cap F_\alpha \neq \emptyset}} I_i \Biggr) < \varepsilon \mbox{ for every } \alpha < \kappa .
\end{equation}
Thus, for any tagged partition $\mathcal{P}=\lbrace (s_i, I_i)\rbrace_{i=1}^M$ the following inequalities hold: 
 
$$ \| h( \mathcal{P} ) \| = \norm[\bigg]{ \sum_{ s_i \in F } \mu(I_i) e_{\phi(s_i)}  } =\norm[\bigg]{ \sum_{\alpha < \kappa} \mu \biggl( \bigcup_{\phi(s_i)=\alpha} I_i \biggr)  e_\alpha  }= $$ $$= \biggl( \sum_{\alpha < \kappa} \mu \biggl( \bigcup_{\phi(s_i)= \alpha} I_i \biggr) ^p \biggr) ^\frac{1}{p} = \biggl( \sum_{\alpha < \kappa}  \mu \biggl( \bigcup_{\phi(s_i)=\alpha} I_i \biggr)^{p-1}  \mu \biggl( \bigcup_{\phi(s_i)=\alpha} I_i \biggr)   \biggr)^\frac{1}{p} \leq $$ $$ \overset{(\ref{eq1})}{\leq} \varepsilon^\frac{p-1}{p} \biggl( \sum_{\alpha < \kappa}  \mu \biggl( \bigcup_{\phi(s_i)=\alpha} I_i \biggr)  \biggr)^\frac{1}{p} \leq \varepsilon^\frac{p-1}{p} $$
Therefore, $h$ is Riemann integrable with Riemann integral zero.
\end{proof}

The LP is separably determined \cite{Pizzotti89}. Nevertheless, it follows from the following theorem that the WLP is not separably determined, since every separable infinite-dimensional subspace of $\ell_2 (\kappa) $ is isomorphic to $\ell_2$ (which has separable dual).

\begin{THEO}
\label{c0lpWLP}
Let $\kappa$ be a cardinal and $X=c_0(\kappa)$ or $X=\ell_p(\kappa)$ with $1<p<\infty$. 
\begin{enumerate}
\item If $ \kappa < \covM $ then $X$ has the WLP.
\item If $ \kappa \geq \nonSN $ then $X$ does not have the WLP.
\end{enumerate}
\end{THEO}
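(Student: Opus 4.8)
The plan is to establish the two implications separately, using the characterizations of $\covM$ and $\nonSN$ recorded in the preliminaries. For part (1), suppose $\kappa < \covM$ and let $f \colon [0,1] \rto X$ be Riemann integrable, hence bounded. By Remark \ref{remark}, the set of points of weak discontinuity of $f$ is $E = \bigcup_{\alpha < \kappa} E_\alpha$, where $E_\alpha$ is the set of points of discontinuity of the $\alpha$'th coordinate function $f_\alpha \colon [0,1] \rto \R$. Each $f_\alpha$ is Riemann integrable (composition of $f$ with a coordinate functional), so each $E_\alpha$ is a countable union of closed Lebesgue-null sets, hence Lebesgue null and contained in a countable union of nowhere dense sets. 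Since $\kappa < \covM$, the union of these $\kappa$-many families of closed null sets still has measure zero (this is exactly the characterization of $\covM$ as the least cardinality of a family of closed null sets whose union is non-null, quoted after Theorem 1.1), so $\mu(E) = 0$ and $f$ is weakly continuous a.e. This part is essentially the same argument as Theorem \ref{dualWLP}; I would phrase it directly via Remark \ref{remark} rather than through a dense set of functionals.

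For part (2), suppose $\kappa \geq \nonSN$. By definition of $\nonSN$ there is a subset $A \subset [0,1]$ with $|A| = \nonSN \leq \kappa$ that is not strongly null. By Theorem \ref{Pawlikowski}, not strongly null means there is a closed set $F \subset \R$ of Lebesgue measure zero such that $A + F$ does not have measure zero. The idea is to build a Riemann integrable function $X$-valued that is weakly discontinuous on a non-null set. I would enumerate (injectively) $A = \{t_\alpha : \alpha < \lambda\}$ with $\lambda \leq \kappa$, and mimic the construction in the proof of Theorem \ref{Teosii}: take pairwise disjoint sets built from translates $t_\alpha + F$ (after refining to make them disjoint, as in the passage from $E_\alpha^n$ to $F_\alpha$ there) and define $g(t) = \sum_{\alpha} \chi_{G_\alpha}(t) e_\alpha$ where $G_\alpha$ is the disjointified piece sitting over $t_\alpha + F$. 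Riemann integrability of $g$ follows by the same computation as for $h$ in the proof of Theorem \ref{Teosii}: since $F$ is closed and null, for each $\alpha$ the set $t_\alpha + F$ is covered by finitely many intervals of small total length appearing in a suitable partition, which forces each coordinate's contribution to the Riemann sum to be small, and in the $c_0$ or $\ell_p$ norm one controls the sum exactly as in equation \eqref{eq1} and the display following it. On the other hand $g$ is not weakly continuous on $\bigcup_\alpha G_\alpha$, and that union covers (a non-null subset of) $A + F$, which is non-null; hence $X$ fails the WLP.

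The main obstacle is the disjointification in part (2): the translates $t_\alpha + F$ genuinely overlap, so to get a well-defined $c_0(\kappa)$- or $\ell_p(\kappa)$-valued function one must pass to disjoint pieces $G_\alpha = (t_\alpha + F) \setminus \bigcup_{\beta < \alpha}(t_\beta + F)$, and then verify two things simultaneously: (i) the disjointified function is still Riemann integrable — this is fine because $G_\alpha \subset t_\alpha + F$ and the integrability estimate only uses that each $G_\alpha$ lies in a set coverable by short intervals, so shrinking the sets only helps; and (ii) the union $\bigcup_\alpha G_\alpha = \bigcup_\alpha(t_\alpha + F) \supseteq A + F$ is still non-null, which holds since disjointification does not change the union. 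A secondary point to be careful about is that each $\chi_{G_\alpha}$ must actually witness weak discontinuity at each point of $G_\alpha$; this requires $\mu(G_\alpha) = 0$, which follows from $G_\alpha \subset t_\alpha + F$ and $\mu(F) = 0$, together with the observation (as in the proof of Theorem \ref{Teosii}) that a $\{0,1\}$-valued function vanishing off a null set is discontinuous at every point where it takes the value $1$ and, paired with $e_\alpha^\ast$, this prevents weak continuity there. Once these are in place, the argument closes exactly along the lines of the proofs of Theorems \ref{Teosii} and \ref{JTWLP}.
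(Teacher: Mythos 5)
Your proposal is correct and follows essentially the same route as the paper: part (1) rests on the characterization of $\covM$ via unions of closed null sets (the paper channels this through Theorem \ref{dualWLP} and the density of $c_0(\kappa)^\ast=\ell_1(\kappa)$, while you argue coordinatewise via Remark \ref{remark}, which amounts to the same thing), and part (2) is exactly the paper's construction: apply Theorem \ref{Pawlikowski} to a non strongly null set of size $\nonSN$, disjointify the translates $x_\alpha+F$ of the closed null set $F$, and send the resulting pieces to the basis vectors, with Riemann integrability controlled uniformly in $\alpha$ by the Riemann integrability of $\chi_F$. The only cosmetic difference is that the paper reduces the $\ell_p$ case to $c_0$ once and for all via Theorem \ref{Teosii}, whereas you redo the $\ell_p$ estimate directly; both work.
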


\begin{proof}

It is enough to prove the result when $X=c_0(\kappa)$ due to Theorem \ref{Teosii}.
Since $c_0(\kappa)^\ast = \ell_1 (\kappa)$ has density character $\kappa$, it follows from Theorem \ref{dualWLP} that $c_0(\kappa)$ has the WLP  if $\kappa < \covM$.

Suppose $\nonSN \leq \kappa \leq \mathfrak{c}$. Due to Theorem \ref{Pawlikowski}, there exist a closed Lebesgue null set $F $ and a set $E=\lbrace x_\alpha \rbrace_{\alpha < \kappa} $ in $\R$ such that $E+F$ does not have Lebesgue measure zero.
Without loss of generality, we may assume that $E,F \subset [0,1]$ and $\( E+F \) \cap [0,1]$ does not have Lebesgue measure zero.
Set $F_0 := \( {x_0}+F \) \cap [0,1]$ and $F_\alpha := \( \({x_\alpha}+F \) \cap [0,1] \) \setminus \( \bigcup_{\beta < \alpha } F_\beta \)$ for every $0 < \alpha < \kappa$. Let $\chi_{F_\alpha} \colon [0,1] \rto \lbrace 0,1 \rbrace$ be the characteristic function of $F_\alpha$ for every $\alpha < \kappa$ and $f \colon [0,1] \rto X $ the function defined by the formula
$f(t)= \sum_{\alpha < \kappa} \chi_{F_\alpha} (t) e_\alpha $ for every $t \in [0,1]$,
where $\lbrace e_\alpha \rbrace _{\alpha < \kappa}$ is the canonical basis of $c_0(\kappa)$.

Since the sets $F_\alpha $ are pairwise disjoint, the function $f$ is well-defined.
Each $\chi_{F_\alpha}$ is not continuous at $F_\alpha$, since $F_\alpha$ cannot contain an interval of $[0,1]$.
Therefore, $f$ is not weakly continuous a.e. because $\bigcup_{\alpha < \kappa} F_\alpha = \(E+F\) \cap [0,1]$ does not have Lebesgue measure zero. 

We claim that $f$ is Riemann integrable. For every $\alpha < \kappa$ and every tagged partition $\mathcal{P} = \lbrace (s_i, I_i) \rbrace_{i=1}^N$ we have 
$$  \chi_{F_\alpha} ( \mathcal{P} )  = \sum_{i=1}^N \mu(I_i) \chi_{F_\alpha}(s_i) \leq \sum_{i=1}^N \mu(I_i-x_\alpha) \chi_{F}(s_i-x_\alpha) = \chi_F (\mathcal{P'} ) $$
for a suitable tagged partition $\mathcal{P'}$ with $\| \mathcal{P} \| = \| \mathcal{P'} \|$.
Since $F \subset [0,1]$ is a closed Lebesgue measure zero set, the characteristic function $\chi_F$ is Riemann integrable due to Lebesgue's Theorem. Then, for every $\varepsilon >0$ there exists $\delta>0$ such that $\chi_F( \mathcal{P} ) < \varepsilon $ for every tagged partition $\mathcal{P}$ with $\| \mathcal{P} \| < \delta$. 
Therefore, for every $\varepsilon >0$ there exists $\delta>0$ such that $\chi_{F_\alpha}( \mathcal{P}) < \varepsilon $ for all tagged partitions $\mathcal{P}$ with $\| \mathcal{P} \| < \delta$ and for every $\alpha < \kappa$.
Thus, $f$ is Riemann integrable since $\|f( \mathcal{P}) \|= \sup_{\alpha < \kappa} \chi_{F_\alpha}( \mathcal{P})  < \varepsilon $ for every tagged partition $\mathcal{P}$ of $[0,1]$ with $\| \mathcal{P} \| < \delta$.   
\end{proof}

The facts that the countable $\ell_1$-sum of spaces with the WLP has the WLP (Theorem \ref{WLPcountable}) and that $L^1(\lambda)$ has the WLP if $\dens (L^1(\lambda))< \covM$ (Theorem \ref{WLPL1}) will be a consequence of the following lemma.

\begin{LEMM}

\label{lemaprincipalaux}

Let $(\Omega,\Sigma,\lambda)$ be a probability space and $\mathfrak{P}=\{P_A: A\in \Sigma\}$
a family of operators on a Banach space $X$ such that
\begin{enumerate}
\item[(1)] $P_A+P_{\Omega \setminus A}=P_\Omega=id_X$ for every $A\in \Sigma$;
\item[(2)] $\|P_A(x)\| \leq \|x\|$ for every $x\in X$ and every $A \in \Sigma$;
\item[(3)] $\|P_A(x)\|+\|P_B(x')\| \leq \max\{\|x+x'\|,\|x-x'\|\}$ for every $x,x'\in X$ whenever $A\cap B=\emptyset$;
\item[(4)] $\lim_{\lambda(A)\to 0}\|P_A(x)\|=0$ for every $x\in X$.
\end{enumerate}

Let $f:[0,1]\to X$ be a Riemann integrable function. Then there is a measurable set $E \sub [0,1]$ with $\mu(E)=1$
such that, for every sequence $\Suc{t_n}{n}$ in~$[0,1]$ converging to some $t\in E$, the set $\{f(t_n):n\in \N\}$ is 
$\mathfrak{P}$-uniformly integrable, in the sense that
$$
	\lim_{\lambda(A)\to 0}\sup_{n\in \N} \big\|P_A(f(t_n))\big\|=0.
$$
\end{LEMM}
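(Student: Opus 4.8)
The plan is to use the Riemann integrability criterion (Theorem from Gordon) together with the structural hypotheses (1)--(4) on $\mathfrak{P}$ to produce, for each $\varepsilon>0$, a single "bad" set of times whose measure is small, and then take a countable intersection of complements to get the full-measure set $E$. First I would fix $\varepsilon>0$ and, using the criterion, choose a partition $\mathcal{P}_\varepsilon=\{I_1,\dots,I_N\}$ of $[0,1]$ such that $\|f(\mathcal{Q}_1)-f(\mathcal{Q}_2)\|<\varepsilon$ for all tagged partitions $\mathcal{Q}_1,\mathcal{Q}_2$ sharing the intervals of $\mathcal{P}_\varepsilon$. The key quantitative claim I would isolate is a \emph{sublemma}: if $s,s'$ are two distinct tags lying in the interior of the \emph{same} interval $I_j$, then for every $A\in\Sigma$,
$$
	\mu(I_j)\,\bigl\|P_A\bigl(f(s)-f(s')\bigr)\bigr\| \;\le\; \varepsilon,
$$
which follows by applying hypothesis (2) to the difference of two Riemann sums that agree off $I_j$ (so $\|P_A(\mu(I_j)(f(s)-f(s')))\|\le \|\mu(I_j)(f(s)-f(s'))\| = \|f(\mathcal{Q}_1)-f(\mathcal{Q}_2)\|<\varepsilon$). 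Actually the cleaner route uses (3): comparing a Riemann sum using two tags $s,s'$ in two \emph{distinct} intervals $I_j,I_k$ against the sums where one tag is perturbed, (3) gives
$$
	\mu(I_j)\|P_A(f(s))\| + \mu(I_k)\|P_B(f(s'))\| \;\le\; \max\{\|f(\mathcal{Q}_1)-f(\mathcal{Q}_2)\|,\dots\} < \varepsilon
$$
whenever $A\cap B=\emptyset$; in particular, taking $s'$ to be a tag at which $f(s')$ is "small" (which exists in intervals not meeting a certain bad set) one controls $\mu(I_j)\|P_A(f(s))\|$.

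From this I would extract the set $B_\varepsilon$ of times $t\in[0,1]$ that lie in the interior of some interval $I_j$ of $\mathcal{P}_\varepsilon$ with $\mu(I_j)$ not too small but along which $f$ fails to be suitably controlled — more precisely, I would let $B_\varepsilon$ be the union of those intervals $I_j$ of $\mathcal{P}_\varepsilon$ of length $\ge\sqrt{\varepsilon}$ for which there is a tag $s\in\Int(I_j)$ with $\sup_{\lambda(A)\to 0}\|P_A(f(s))\|$ bounded below; combined with hypothesis (4), which forces any individual $f(s)$ to be $\mathfrak{P}$-uniformly integrable, and the estimate above, one sees that such $B_\varepsilon$ has measure $O(\sqrt\varepsilon)$ (the long intervals are few, and on each, the oscillation of $\|P_A f(\cdot)\|$ for small $\lambda(A)$ is controlled by $\varepsilon/\mu(I_j)\le\sqrt\varepsilon$). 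Then set $E := [0,1]\setminus \bigcap_{m} \bigcup_{k\ge m} B_{1/k}$ — i.e.\ the set of $t$ outside all but finitely many $B_{1/k}$ — so $\mu(E)=1$ by Borel--Cantelli since $\sum_k \mu(B_{1/k})$ can be arranged summable by choosing $\varepsilon$ along a fast sequence.

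For the main verification: fix $t\in E$ and a sequence $t_n\to t$. For large $k$, $t\notin B_{1/k}$, so $t$ lies in (the interior of, or is an endpoint of adjacent intervals of) $\mathcal{P}_{1/k}$; since $t_n\to t$, eventually all $t_n$ lie in the union of the intervals of $\mathcal{P}_{1/k}$ adjacent to $t$, which are short or "good." Using the sublemma to compare $P_A(f(t_n))$ with $P_A(f(s))$ for a fixed good tag $s$ near $t$, and using hypothesis (4) for that single $s$, I get $\sup_n \|P_A(f(t_n))\| \le \|P_A(f(s))\| + C/k$ for small enough $\lambda(A)$; letting $\lambda(A)\to 0$ and then $k\to\infty$ yields the conclusion. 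I expect the main obstacle to be the bookkeeping around endpoints and the passage from "distinct tags in distinct intervals" (where (3) applies) to controlling tags in a \emph{single} interval, and in particular making sure the measure bound on $B_\varepsilon$ genuinely uses (3) rather than just (2) — the subtlety is that (2) alone only bounds $\mu(I_j)\|P_A(f(s)-f(s'))\|$, not $\mu(I_j)\|P_A(f(s))\|$, so one really needs the disjointness-additivity in (3) (together with the existence of a nearby tag where $f$ is $P$-small, guaranteed off the bad set via (4)) to close the argument.
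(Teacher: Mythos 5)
Your overall architecture (a bad set for each quantitative level, a countable union, then a routine verification at points of the complement) matches the paper's, and your final verification step is essentially the right one. But there is a genuine gap at the heart of the argument: you never actually prove that the bad set is null, and the estimates you propose cannot do it. The single-interval sublemma via (2) gives only $\|P_A(f(s)-f(s'))\|\le \varepsilon/\mu(I_j)$ for tags in the same interval $I_j$ of $\mathcal{P}_\varepsilon$, which is useless on short intervals --- and the union of the intervals of $\mathcal{P}_\varepsilon$ of length $<\sqrt{\varepsilon}$ can have measure close to $1$, so restricting attention to long intervals does not leave a small exceptional set. Your definition of $B_\varepsilon$ is also incoherent as stated: by hypothesis (4) the quantity $\limsup_{\lambda(A)\to 0}\|P_A(f(s))\|$ is $0$ for \emph{every single} $s$, so the condition ``bounded below'' is never satisfied, $B_\varepsilon=\emptyset$, and you would conclude $E=[0,1]$, which cannot be right. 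The correct bad set must couple the spatial variable and the measure variable: the paper takes $E_\beta$ to be the set of $t$ such that for every $\delta>0$ there exist $t'$ with $|t'-t|<\delta$ \emph{and} $A$ with $\lambda(A)<\delta$ satisfying $\|P_A(f(t)-f(t'))\|>\beta$.

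The missing idea is the inductive accumulation argument that shows $\mu^\ast(E_\beta)=0$. Assuming $\mu^\ast(E_\beta)>0$, one chooses a partition with Riemann-sum oscillation below $\beta\mu^\ast(E_\beta)$ and then builds, interval by interval over \emph{all} intervals meeting $E_\beta$, pairs $t_j,t_j'$ with
$$\norm[\Big]{\sum_{j\le k}\mu(I_j)\bigl(f(t_j)-f(t_j')\bigr)}>\beta\sum_{j\le k}\mu(I_j).$$
The induction step is exactly where (3) and (4) interact: writing $x$ for the accumulated sum, one first uses (4) to pick $\delta$ with $\|P_A(x)\|$ small for $\lambda(A)<\delta$, then uses the definition of $E_\beta$ to find $t_{k+1},t_{k+1}'$ and such an $A$ with $\|P_A(y)\|>\beta\mu(I_{k+1})$ for $y=\mu(I_{k+1})(f(t_{k+1})-f(t_{k+1}'))$, and finally uses (1) and (3) on the disjoint pair $A, A^c$ to get $\max\{\|x+y\|,\|x-y\|\}\ge\|P_A(y)\|+\|P_{A^c}(x)\|\ge\|P_A(y)\|+\|x\|-\|P_A(x)\|$. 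Summed over all relevant intervals this contradicts the Riemann criterion. Your proposal gestures at (3) only for a pair of intervals and relies on the existence of a tag where $f$ is ``$P$-small'', which is not available here (that reduction is used elsewhere in the paper for $\ell_1$-sums, not in this lemma); without the accumulation step the proof does not close.
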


\begin{proof}

The proof is similar to that of \cite[Lemma 2.3]{MR2591586} and \cite[Lemma 3]{Wang2001}.
Fix $\beta >0$ and denote by $E_\beta$ the set of points $t \in [0,1]$ such that for every $\delta >0 $ there exist $t' \in [0,1]$  with $|t'-t| < \delta $ and a set $A \in \Sigma$ with $\lambda(A)< \delta$ such that $$\| P_A (f(t) - f(t'))\| > \beta.$$ 
Let $\mu^\ast$ be the Lebesgue outer measure in $[0,1]$. We show that $\mu^\ast ( E_\beta)=0$ with a proof by contradiction. Suppose $\mu^\ast ( E_\beta)>0$. Since $f$ is Riemann integrable, we can choose a partition $\PP=\lbrace J_1,\dots, J_m \rbrace$ of $[0,1]$ such that
\begin{equation} \label{**}
\norm[\bigg]{ \sum \limits_{j=1}^m \mu(J_j)(f(\xi_j)-f(\xi_j')) } < \beta \mu^\ast (E_\beta) 
\end{equation}
for all choices $\xi_j, \xi_j' \in J_j, 1 \leq j \leq m$. Let $S= \lbrace j \in \lbrace 1,\dots,m \rbrace: I_j \cap E_\beta \neq \emptyset \rbrace$, where $I_j = \Int (J_j)$ for each $j=1,\dots,m$. Thus, 
\begin{equation} \label{***}
\sum_{j \in S} \mu^\ast (I_j \cap E_\beta) = \mu^\ast ( E_\beta).
\end{equation}
It is not restrictive to suppose $S=\lbrace 1,\dots,n \rbrace $ for some $1 \leq n \leq m$.

Because of the definition of $ E_\beta $ and $I_1$, there exist points $t_1 \in I_1 \cap E_\beta $ and $ t_1' \in I_1$ such that $ \| f(t_1) - f(t_1') \|\geq \| P_A (f(t_1)-f(t_1'))\|  > \beta $ for some $A\in \Sigma$, hence $\| \mu(I_1) (f(t_1)- f(t_1')) \| > \beta \mu(I_1) $. 

Fix $1 \leq k < n$ and assume that we have already chosen points $t_j ,t_j' \in I_j$ for all $1\leq j \leq k$ with the property that $$ \norm[\bigg]{ \sum \limits_{j=1}^k \mu (I_j) (f(t_j) - f(t_j')) } > \beta \biggl( \sum \limits_{j=1}^k \mu(I_j) \biggr).$$
	
Define $x:= \sum \limits_{j=1}^k \mu(I_j) ( f(t_j)-f(t_j')) \in X$ and 
$$ \alpha := \| x \| - \beta \biggl( \sum \limits_{j=1}^k \mu(I_j)\biggr) > 0.$$
Due to (4), we can choose $\delta >0$ such that $  \| P_A (x) \| <\alpha$ whenever $A\in \Sigma$ satisfies $\lambda(A) <\delta$. Take $t_{k+1}, t_{k+1}' \in I_{k+1}$ and a set $A\in \Sigma $ with $\lambda(A)<\delta$ such that 
$ \| P_A (f(t_{k+1}) - f(t_{k+1}'))\| > \beta $,
so $y:= \mu( I_{k+1})(f(t_{k+1})- f(t_{k+1}'))$ satisfies $$ \| P_A(y)\| > \beta \mu (I_{k+1}).$$

By the choice of $A$, (1) and (3), we also have (interchanging the role of $t_{k+1}$ and $t_{k+1}'$ if necessary)
$$ \norm[\bigg]{ \sum \limits_{j=1}^{k+1} \mu (I_j) (f(t_j)-f(t_j')) } \geq \| P_A(y) \|+ \| P_{A^c}(x) \| \geq \| P_A(y) \|+ \| x\| - \| P_{A}(x) \| > $$
$$ > \beta\mu(I_{k+1}) + \alpha + \beta \sum \limits_{j=1}^{k} \mu(I_j) - \|P_{A}(x)\| >  \beta \sum \limits_{j=1}^{k+1} \mu(I_j) .$$
 
Thus, there exist $t_j, t_j' \in I_j$ for all $1 \leq j \leq n $ such that 
$$ \norm[\bigg]{ \sum \limits_{j=1}^n \mu(I_j)(f(t_j) - f(t_j')) } > \beta \biggl( \sum \limits_{j=1}^n \mu(I_j) \biggr) \overset{(\ref{***})}{\geq} \beta \mu^\ast (E_\beta),$$
which contradicts the inequality (\ref{**}).
So we can conclude that $\mu^\ast( E_\beta) = 0$.

Therefore, $E:= [0,1] \setminus \bigcup_{n \in \N} E_{\frac{1}{n}}$ is measurable with $\mu(E)=1$.
Fix $t\in E$ and $m \in \N$. Since $t \notin E_{\frac{1}{m}}$, there exists $\delta_m >0$ such that for every $t' \in [0,1]$  with $|t'-t| < \delta_m $ and every set $A \in \Sigma$ with $\lambda(A)< \delta_m$, $$\| P_A (f(t) - f(t'))\| \leq \frac{1}{m}.$$ 
Thus, for every $m\in \N$, every sequence $\Suc{t_n}{n}$ converging to $t$ and every $A\in \Sigma$ with $\lambda (A) < \delta_m$, $$\|P_A( f(t_n))\| \leq \|P_A (f(t)) \| + \frac{1}{m} \mbox{ for } n \mbox{ big enough depending only on }m. $$ Now the conclusion follows from (4).
\end{proof}

Let $\lbrace X_i \rbrace_{i \in \Gamma }$ be a family of Banach spaces. We denote by $\pi_j: (\bigoplus_{i \in \Gamma} X_i) \rto X_j $ the canonical projection onto $X_j$ for each $j \in \Gamma$.

We will need the following property of $\ell_1$-sums and the space $L_1(\lambda)$ for Theorems \ref{WLPcountable} and \ref{WLPL1}:

\begin{LEMM} 
\label{lemaauxiliar}
Let $(\Omega, \Sigma, \lambda)$ be a probability space and $\lbrace X_i \rbrace_{i \in \Gamma} $ a family of Banach spaces. Then:
\begin{enumerate}
\item $\max \lbrace \|x+y\|,\|x-y\| \rbrace \geq \sum_{i \in A} \| \pi_i(x) \| + \sum_{i \in B} \|\pi_i(y) \|$ for every vectors $x,y \in (\bigoplus_{i \in \Gamma } X_i )_{\ell_1}$  and any disjoint sets $A,B \subset \Gamma$.
\item $\max \lbrace \|f+g\|,\|f-g\| \rbrace \geq \int_A |f|d\lambda + \int_B |g|d\lambda$ for any $f,g \in L_1(\lambda)$ and any disjoint sets $A,B \in \Sigma$.
\end{enumerate}
\end{LEMM}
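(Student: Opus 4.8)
The plan is to reduce both statements to one elementary observation: in any normed space one has
$\|u+v\|+\|u-v\|\ge 2\|u\|$ and $\|u+v\|+\|u-v\|\ge 2\|v\|$, the first by applying the triangle inequality to $2u=(u+v)+(u-v)$, the second to $2v=(u+v)-(u-v)$; in $\R$ this is just the identity $|a+b|+|a-b|=2\max\{|a|,|b|\}$. The decisive point is that $\max\{\|x+y\|,\|x-y\|\}\ge\tfrac12\big(\|x+y\|+\|x-y\|\big)$, so it suffices to bound the \emph{average} of the two norms from below; and the average splits coordinatewise (resp.\ pointwise), which is what lets us use the sign $+1$ on $A$ and the sign $-1$ on $B$ \emph{simultaneously}, even though globally only one sign is at our disposal.

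For (1) I would write, by definition of the $\ell_1$-sum norm, $\|x+y\|=\sum_{i\in\Gamma}\|\pi_i(x)+\pi_i(y)\|$ and $\|x-y\|=\sum_{i\in\Gamma}\|\pi_i(x)-\pi_i(y)\|$, add the two series, and compare them termwise: for $i\in A$ the $i$-th term is $\ge 2\|\pi_i(x)\|$, for $i\in B$ it is $\ge 2\|\pi_i(y)\|$, and for the remaining indices it is $\ge 0$; since $A\cap B=\emptyset$, no index is charged twice. Dividing by $2$ yields $\tfrac12\big(\|x+y\|+\|x-y\|\big)\ge\sum_{i\in A}\|\pi_i(x)\|+\sum_{i\in B}\|\pi_i(y)\|$, and the assertion follows from $\max\ge$ average.

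For (2) I would run exactly the same argument with the Lebesgue integral in place of the sum: $\|f+g\|+\|f-g\|=\int_\Omega\big(|f+g|+|f-g|\big)\,d\lambda$, then split the domain as $A\cup B\cup(\Omega\setminus(A\cup B))$ and use the pointwise bounds $|f+g|+|f-g|\ge 2|f|$ on $A$, $|f+g|+|f-g|\ge 2|g|$ on $B$, and $|f+g|+|f-g|\ge 0$ elsewhere, again exploiting $A\cap B=\emptyset$ so the pieces do not overlap; divide by $2$ and use $\max\ge$ average.

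I do not expect a genuine obstacle here; the only subtlety worth flagging is that one cannot choose the sign of $y$ (resp.\ of $g$) separately on each coordinate (resp.\ at each point), so a naive "take the better of $\|x+y\|$ and $\|x-y\|$" does not work, and passing to the average is precisely what repairs this. One should also remark that all the sums and integrals involved are finite, since $x,y$ (resp.\ $f,g$) lie in the relevant spaces, so there are no convergence issues.
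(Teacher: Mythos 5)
Your proof is correct, and it takes a genuinely different route from the paper's. For part (1) the paper argues by duality: it picks norming functionals $x^\ast$ supported on $A$ and $y^\ast$ supported on $B$ in $(\bigoplus_{i\in\Gamma}X_i^\ast)_{\ell_\infty}$, observes that disjointness of the supports gives $\|x^\ast+y^\ast\|=\|x^\ast-y^\ast\|=1$, and evaluates $\langle x+y,x^\ast+y^\ast\rangle+\langle x-y,x^\ast-y^\ast\rangle$ to bound $\|x+y\|+\|x-y\|$ from below; for part (2) it simply cites Lemma~2 of Wang's paper. You instead expand the $\ell_1$-norm (resp.\ the $L^1$-norm) as a sum over coordinates (resp.\ an integral over $\Omega$) and apply the elementary inequality $\|u+v\|+\|u-v\|\ge 2\max\{\|u\|,\|v\|\}$ termwise, charging $A$ with the $x$-term and $B$ with the $y$-term; disjointness enters only to guarantee no index is charged twice. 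Both arguments hinge on the same reduction $\max\ge\tfrac12(\text{sum})$, but yours is more elementary (no dual space, no Hahn--Banach selection of norming functionals), it is self-contained for part (2) rather than deferring to a reference, and it makes transparent exactly where disjointness is used. The paper's duality argument has the mild advantage of not invoking the explicit additive form of the norm, but for these two concrete spaces that form is available, so nothing is lost.
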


\begin{proof}
The second part is essentially Lemma 2 of \cite{Wang2001}.
The proof of the first part is analogous and we include it for the sake of completeness.
Let $x,y \in (\bigoplus_{i \in \Gamma } X_i )_{\ell_1}$, $A,B \subset \Gamma$ be disjoint sets and $\varepsilon >0$. Without loss of generality, we may assume that $A= \lbrace a_n \colon n \in \N \rbrace$ and $B= \lbrace b_n \colon n \in \N \rbrace$ are countable subsets. Consider the functionals $x^\ast, y^\ast \in (\bigoplus_{i \in \Gamma } X_i )_{\ell_1}^\ast = (\bigoplus_{i \in \Gamma } X_i^\ast )_{\ell_\infty}$ defined by 
$ x^\ast(u) = \sum_{i \in A} x_i^\ast(\pi_i(u)) $ and 
$ y^\ast(u) = \sum_{i \in B} y_i^\ast(\pi_i(u)) $ for every $u \in (\bigoplus_{i \in \Gamma } X_i )_{\ell_1}$, where each $x_i ^\ast, y_i^\ast \in X_i^\ast$ satisfies $\|x_i^\ast\|=\|y_i^\ast\|=1$, $x_i ^\ast( \pi_i(x)) = \| \pi_i(x) \|$ if $i=a_n$ and $y_i ^\ast( \pi_i(y)) = \| \pi_i(y) \|$ if $i=b_n$.
Then, since $A,B$ are disjoint, $\|x^\ast+y^\ast\| =\|x^\ast-y^\ast\| = 1$. Therefore,
$$ \|x+y \| + \| x-y\| \geq \langle x+y, x^\ast+y^\ast \rangle + \langle x-y, x^\ast - y^\ast \rangle= 2 \langle x, x^\ast \rangle + 2 \langle y, y^\ast \rangle = $$
$$ = 2 \biggl( \sum_{i \in A} x_i^ \ast( \pi_i(x)) + \sum_{i \in B} y_i^\ast( \pi_i(y)) \biggr) = 2 \biggl(\sum_{i \in A} \| \pi_i(x) \| + \sum_{i \in B} \|\pi_i(y) \| \biggr),$$
so $\max \lbrace \|x+y\|,\|x-y\| \rbrace \geq \sum_{i \in A} \| \pi_i(x) \| + \sum_{i \in B} \|\pi_i(y) \|$.
\end{proof}

\begin{THEO}

\label{WLPcountable}

Let $\lbrace X_i \rbrace_{i \in \N} $ be Banach spaces with the WLP. Then the space $X:=(\bigoplus_{i \in \N} X_i )_{\ell_1} $ has the WLP.

\end{THEO}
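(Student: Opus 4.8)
The plan is to apply Lemma~\ref{lemaprincipalaux} with the coordinate projections of the $\ell_1$-sum, then to combine the resulting uniform control of the tails of the values of $f$ with the WLP of each coordinate space, and to finish with an $\varepsilon$-splitting of $X^\ast$-functionals into a finite head and a small tail.

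First I would take $\Omega = \N$, $\Sigma = 2^{\N}$ and $\lambda$ the probability measure on $\N$ with $\lambda(\{n\}) = 2^{-n}$, and for $A \subseteq \N$ let $P_A \colon X \to X$ be the coordinate projection onto $A$, so that $\|P_A(x)\| = \sum_{i\in A}\|\pi_i(x)\|$. It then remains to check that $\mathfrak{P} = \{P_A : A \in \Sigma\}$ satisfies the four hypotheses of Lemma~\ref{lemaprincipalaux}: (1) and (2) are immediate from the definition of the $\ell_1$-norm; (3) is exactly Lemma~\ref{lemaauxiliar}(1); and (4) holds because $\lambda(A) < 2^{-N}$ forces $A \subseteq \{N+1, N+2, \dots\}$, so that $\|P_A(x)\| \leq \sum_{i>N}\|\pi_i(x)\| \to 0$ as $N\to\infty$. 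The role of the weights $2^{-n}$ is precisely to ensure that $\lambda(A)$ is small exactly when $A$ is contained in a far tail of $\N$; thus, for a bounded family $\{f(t_n):n\in\N\}$, being $\mathfrak{P}$-uniformly integrable means $\lim_{N\to\infty}\sup_n \sum_{i>N}\|\pi_i(f(t_n))\| = 0$.

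Now let $f \colon [0,1] \to X$ be Riemann integrable. Lemma~\ref{lemaprincipalaux} produces a measurable $E \subseteq [0,1]$ with $\mu(E) = 1$ such that, for every sequence $(t_n)_n$ converging to some $t \in E$, the tails $\sum_{i>N}\|\pi_i(f(t_n))\|$ are small uniformly in $n$ as $N\to\infty$. On the other hand, for each $i\in\N$ the composition $\pi_i \circ f$ is Riemann integrable (as $\pi_i$ is an operator), so, $X_i$ having the WLP, there is a measurable $E_i \subseteq [0,1]$ with $\mu(E_i) = 1$ on which $\pi_i \circ f$ is weakly continuous. Put $E' := E \cap \bigcap_{i\in\N} E_i$, so that $\mu(E') = 1$; I claim that $f$ is weakly continuous at every point of $E'$, which proves the theorem.

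To prove the claim, fix $t \in E'$, a sequence $t_n \to t$ in $[0,1]$, a functional $x^\ast = (x_i^\ast)_i \in X^\ast = (\bigoplus_{i\in\N} X_i^\ast)_{\ell_\infty}$ with $M := \sup_i\|x_i^\ast\| < \infty$, and $\varepsilon > 0$. Since $f(t)\in X$, and by the tail uniformity above, I can pick $N$ with $\sum_{i>N}\|\pi_i(f(t))\| < \varepsilon$ and $\sup_n\sum_{i>N}\|\pi_i(f(t_n))\| < \varepsilon$; then
$$\bigl|\langle x^\ast, f(t_n) - f(t)\rangle\bigr| \leq \sum_{i=1}^{N}\bigl|\langle x_i^\ast, \pi_i(f(t_n)) - \pi_i(f(t))\rangle\bigr| + 2M\varepsilon .$$
The finite sum tends to $0$ as $n\to\infty$ because each $\pi_i \circ f$ is weakly continuous at $t \in E_i$, so $\limsup_n \bigl|\langle x^\ast, f(t_n) - f(t)\rangle\bigr| \leq 2M\varepsilon$; as $\varepsilon$ was arbitrary, $x^\ast f$ is continuous at $t$, and since $x^\ast$ was arbitrary and $[0,1]$ is metrizable this means $f$ is weakly continuous at $t$. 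I expect the only mildly delicate point to be the bookkeeping that makes Lemma~\ref{lemaprincipalaux} directly applicable in this discrete setting; once the projections and the measure $\lambda$ are chosen so that vanishing measure corresponds to far tails, the remainder is the routine splitting just described, with the finite head handled by the WLP of the $X_i$ and the tail by the lemma.
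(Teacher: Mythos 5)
Your proposal is correct and follows essentially the same route as the paper: the same choice of $\Omega=\N$, $\lambda(\{n\})=2^{-n}$ and coordinate projections $P_A$ to invoke Lemma~\ref{lemaprincipalaux} (with (3) supplied by Lemma~\ref{lemaauxiliar}(1)), followed by combining coordinatewise weak continuity from the WLP of each $X_i$ with the uniform tail control. The only difference is presentational: where the paper cites the standard characterization of weak convergence in $\ell_1$-sums (coordinatewise weak convergence plus uniformly small tails), you prove the needed direction inline via the head--tail splitting of a functional in $(\bigoplus_{i}X_i^\ast)_{\ell_\infty}$.
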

\begin{proof}
We are going to apply Lemma \ref{lemaprincipalaux}.
Take $\Omega:=\N$, $\Sigma:=\mathcal{P}(\N)$ the power set of $\N$,
$\lambda(A):=\sum_{n\in A}2^{-n}$ and $\mathfrak{P}=\{P_A: A\in \Sigma\}$ with 
$$ \pi_i (P_A(x))= \begin{cases}
\pi_i(x) & \mbox{ if } i \in A \\
0 & \mbox{ if } i \notin A
\end{cases}
$$
for every $A \in \Sigma$ and every $x \in X$. Property (3) of Lemma \ref{lemaprincipalaux} is Lemma \ref{lemaauxiliar}(1) and property (4) holds because if $\lambda(A) < \frac{1}{2^n}$, then $A \subset \lbrace n, n+1, \dots \rbrace$, so $$ \| P_A (x) \| = \sum_{i \in A} \| \pi_i (x) \| \leq \sum_{i \geq n } \| \pi_i (x) \| $$
for every $x \in X$.
Therefore, we can apply Lemma \ref{lemaprincipalaux}, so there exists a measurable set $E \subset [0,1]$ with $\mu(E)=1$ such that for every sequence $\Suc{t_n}{n}$ in $[0,1]$ converging to some $t \in E$ the set $\lbrace f(t_n): n \in \N \rbrace$ is $\mathfrak{P}$-uniformly integrable. We can assume that, for each $i\in \N$, the 
map $t \mapsto \pi_i(f(t))$ is weakly continuous at each point of~$E$ because each $X_i$ has the WLP.

It is a well known fact that a sequence $\Suc{x_n}{n}$ in~$X$ converges weakly to~$x\in X$ if and only if it satisfies the following two conditions:
\begin{enumerate}
\item[(i)] $\pi_i(x_n) \to \pi_i(x)$ weakly in~$X_i$ for every $i\in \N$;
\item[(ii)] for every $\varepsilon>0$ there is a finite set $J \sub \N$ such that $\sup_{n\in \N}\|P_{\N \setminus J}(x_n)\|\leq \varepsilon$.
\end{enumerate}

Since $\mathfrak{P}$-uniform integrability is equivalent to (ii), 
it follows that $f$ is weakly continuous at each point of $E$.
\end{proof}

\nocite{MiragliaRocha84}

A similar idea to that of Theorem \ref{WLPcountable} let us prove the following theorem, which improves 
\cite[Theorem 5]{Wang2001} and \cite[Proposition 2.10]{MR2591586}.

\begin{THEO}

\label{WLPL1}

Let $(\Omega, \Sigma, \lambda)$ be a probability space.
\begin{enumerate}
\item  If $\dens(L^1(\lambda))<\covM$ then $L^1 ( \lambda )$ has the WLP.
\item  If $\dens(L^1(\lambda))\geq \nonSN$ then $L^1 ( \lambda )$ does not have the WLP.
 \end{enumerate}
\end{THEO}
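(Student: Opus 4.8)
The plan is to deduce part~(1) from Lemma~\ref{lemaprincipalaux} combined with the counting argument of Theorem~\ref{dualWLP}, and to prove part~(2) by reducing, through the structure theory of $L^1$-spaces, to the homogeneous case $L^1(\{0,1\}^\Gamma)$, where the coordinate (Rademacher) functions play the role that the unit vectors $e_\alpha$ play in the proof of Theorem~\ref{c0lpWLP}(2).

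For (1) I would apply Lemma~\ref{lemaprincipalaux} with $(\Omega,\Sigma,\lambda)$ the given probability space, $X=L^1(\lambda)$, and $P_A(h):=\chi_A\,h$ for $A\in\Sigma$: conditions~(1) and~(2) are trivial, (3) is precisely Lemma~\ref{lemaauxiliar}(2), and (4) is the absolute continuity of the integral. Hence, for a Riemann integrable $f\colon[0,1]\rto L^1(\lambda)$, there is $E_0\sub[0,1]$ with $\mu(E_0)=1$ such that $\{f(t_n):n\in\N\}$ is uniformly integrable whenever $\Suc{t_n}{n}$ converges to some $t\in E_0$. Assuming $L^1(\lambda)$ infinite-dimensional (the other case being trivial), $\dens(L^1(\lambda))$ equals the density character of the measure algebra of $\lambda$, so if $\dens(L^1(\lambda))<\covM$ we may fix $\{A_i\}_{i\in I}\sub\Sigma$ with $|I|<\covM$ whose classes are dense in that algebra. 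Each scalar function $t\mapsto\int_{A_i}f(t)\,d\lambda$ is Riemann integrable, so its discontinuity set $D_i$ is a countable union of closed null sets; thus $D:=\bigcup_{i\in I}D_i$ is a union of fewer than $\covM$ closed null sets and $\mu(D)=0$. Put $E:=E_0\setminus D$. For $t\in E$ and $t_n\to t$, the set $K:=\{f(t_n):n\}\cup\{f(t)\}$ is bounded and uniformly integrable and $\int_{A_i}f(t_n)\to\int_{A_i}f(t)$ for every $i$; using uniform integrability of $K$ to pass from the dense family $\{A_i\}$ to arbitrary $A\in\Sigma$, then to simple functions, and finally (using $\sup_{g\in K}\|g\|_1<\infty$) to arbitrary $\phi\in L^\infty(\lambda)=L^1(\lambda)^\ast$, one obtains $\langle f(t_n),\phi\rangle\to\langle f(t),\phi\rangle$. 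So $f$ is weakly continuous on $E$ and $L^1(\lambda)$ has the WLP.

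For (2), the first observation is that $\nonSN$ has uncountable cofinality: strongly null sets form a $\sigma$-ideal, so if $\nonSN$ were a supremum of countably many smaller cardinals, a non--strongly null witness of size $\nonSN$ could be split into countably many sets of size $<\nonSN$, each strongly null, making the whole set strongly null --- a contradiction. By Maharam's theorem $L^1(\lambda)$ is isometric to an $\ell_1$-sum of at most countably many spaces $L^1(\{0,1\}^{\kappa_j})$ (some $\kappa_j$ possibly finite, corresponding to atoms), and $\dens(L^1(\lambda))$ is the supremum of the $\kappa_j$; since this supremum is $\geq\nonSN$ and $\nonSN$ has uncountable cofinality, some $\kappa_j\geq\nonSN$. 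Thus $L^1(\lambda)$ has a subspace isometric to $L^1(\{0,1\}^\Gamma)$ with $|\Gamma|\geq\nonSN$, and since the WLP passes to subspaces it suffices to show $L^1(\{0,1\}^\Gamma)$ fails the WLP. Here I would mimic Theorem~\ref{c0lpWLP}(2): by Theorem~\ref{Pawlikowski} take a closed null $F$ and $E=\{x_\alpha\}_{\alpha<\nonSN}$ with $E,F\sub[0,1]$ and $(E+F)\cap[0,1]$ non-null; disjointify to pairwise disjoint $F_\alpha\sub x_\alpha+F$ with $\bigcup_\alpha F_\alpha=(E+F)\cap[0,1]$; fix an injection $\alpha\mapsto\gamma_\alpha\in\Gamma$ and let $r_\gamma\colon\{0,1\}^\Gamma\to\{-1,1\}$ be the coordinate functions; and set $f(t):=\sum_{\alpha<\nonSN}\chi_{F_\alpha}(t)\,r_{\gamma_\alpha}$, a bounded map into $L^1(\{0,1\}^\Gamma)$. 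Since $\langle r_\beta,r_\alpha\rangle=\delta_{\alpha\beta}$, the scalar function $\langle f(\cdot),r_{\gamma_\alpha}\rangle$ equals $\chi_{F_\alpha}$, which is discontinuous at every point of the measure-zero, interval-free set $F_\alpha$; hence $f$ is not weakly continuous at any point of $\bigcup_\alpha F_\alpha$, a set of positive measure. For Riemann integrability, a Riemann sum has the form $f(\mathcal{P})=\sum_\alpha c_\alpha r_{\gamma_\alpha}$ with $c_\alpha\geq0$ and $\sum_\alpha c_\alpha\leq1$ (the $F_\alpha$ are disjoint), while $c_\alpha$ is bounded by the total length of the intervals of $\mathcal{P}$ meeting $x_\alpha+F$; since $\chi_F$ is Riemann integrable by Lebesgue's theorem, with a bound independent of the translation $x_\alpha$, we get $\sup_\alpha c_\alpha<\varepsilon$ once $\|\mathcal{P}\|$ is small, whence $\|f(\mathcal{P})\|_1\leq\|f(\mathcal{P})\|_2=(\sum_\alpha c_\alpha^2)^{1/2}\leq(\sup_\alpha c_\alpha)^{1/2}(\sum_\alpha c_\alpha)^{1/2}<\sqrt\varepsilon$, so $f$ is Riemann integrable with integral $0$.

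The routine parts above --- the counting argument, the disjointification, and the trivial estimate $\|\cdot\|_1\leq\|\cdot\|_2$ replacing the $\ell_p$-estimate of Theorem~\ref{c0lpWLP}(2) --- present no difficulty. The step I expect to be the main obstacle is the structural reduction in~(2): recognizing that uncountable cofinality of $\nonSN$ is exactly what makes Maharam's classification deliver a homogeneous component of Maharam type at least $\nonSN$, and hence an isometric copy of $L^1(\{0,1\}^\Gamma)$ with $|\Gamma|\geq\nonSN$ sitting inside $L^1(\lambda)$.
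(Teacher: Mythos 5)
Your proof is correct. Part (1) is essentially the paper's own argument: the same application of Lemma \ref{lemaprincipalaux} with $P_A(h)=\chi_A h$, a dense family of fewer than $\covM$ measurable sets, and the $\covM$-counting argument of Theorem \ref{dualWLP}; the paper packages the final limit exchange as ``bounded $+$ uniformly integrable $=$ relatively weakly compact'' via Dunford's theorem and then tests weak convergence on the dense family, whereas you run the $\varepsilon$-approximation from $\{A_i\}$ to $L^\infty(\lambda)$ by hand --- the same proof in different clothing. Part (2) is where you genuinely diverge. The paper invokes Maharam's theorem to get an isometric copy of $L^1(\mu_\nu)$ with $\nu=\dens(L^1(\lambda))$, then cites the classical isomorphic embedding $\ell_2(\nu)\hookrightarrow L^1(\mu_\nu)$ and finishes by Theorem \ref{c0lpWLP}(2). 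You instead extract, via Maharam plus the observation that $\nonSN$ has uncountable cofinality, a homogeneous component $L^1(\{0,1\}^\Gamma)$ with $|\Gamma|\geq\nonSN$, and then rebuild the counterexample of Theorem \ref{c0lpWLP}(2) directly with Rademacher functions in place of the unit vectors, using $\|\cdot\|_1\leq\|\cdot\|_2$ and orthonormality for the Riemann-sum estimate. The two routes are close relatives (the span of the Rademachers is exactly the $\ell_2(\Gamma)$-copy the paper uses), but yours buys two things: it is self-contained, avoiding Khintchine's inequality and the transfer of WLP-failure through an isomorphism; and your cofinality argument handles cleanly the case where the supremum of the Maharam types is not attained, a point the paper's bald assertion that $L^1(\lambda)$ contains an isometric copy of $L^1(\mu_\nu)$ glosses over. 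The only detail to keep an eye on is the translation step $\chi_{F_\alpha}(\mathcal{P})\leq\chi_F(\mathcal{P}')$, where $\mathcal{P}'$ lives on a translated interval rather than on $[0,1]$; this is the same (harmless, but worth a sentence) issue already present in the paper's proof of Theorem \ref{c0lpWLP}(2).
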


\begin{proof}
Fix a Riemann integrable function $f \colon [0,1] \rto L^1(\lambda)$.
Take $P_A(x):=x\chi_A$ for every $A \in \Sigma$ and every $x \in L^1(\lambda)$. The family of operators $\lbrace P_A \colon A \in \Sigma \rbrace $ fulfills the requirements of Lemma \ref{lemaprincipalaux} (bear in mind Lemma \ref{lemaauxiliar}). Then $\mathfrak{P}$-uniform integrability is the
usual uniform integrability and therefore a set is bounded and $\mathfrak{P}$-uniformly integrable if and only if it is relatively weakly compact due to Dunford's Theorem (see \cite[Theorem 5.2.9]{opac-b1119539}). 
Lemma \ref{lemaprincipalaux} ensures that there exist a measurable set $E \subset [0,1]$ with $\mu(E)=1$ such that for every sequence $\Suc{t_n}{n}$ in $[0,1]$ converging to some $t \in E$, the set $\lbrace f(t_n) \colon n \in \N \rbrace$ is relatively weakly compact.

Let $\CC \subset \Sigma $ be a dense family of $\lambda$-measurable sets, i.e. such that $$\inf_{C \in \CC} \lambda(A \bigtriangleup C) =0 \mbox{ for every } A \in \Sigma .$$

Let $\Suc{h_n}{n}$ be a relatively weakly compact
sequence in $L^1(\lambda)$ and $h\in L^1(\lambda)$. Since $\CC$ is a dense family of $\lambda$-measurable sets, if $\int_C h_n \, d\mu \to \int_C h \, d\mu$ for every $C\in \mathcal{C}$, then 
$h=\w$-$\lim h_n$.

Suppose $\dens(L^1 ( \lambda ))<\covM$. Then $\CC$ can be taken such that $|\CC| < \covM$. Therefore, we can assume that, for each $C\in \mathcal{C}$, the Riemann integrable
map $t \mapsto \int_C f(t) \, d\lambda$ is continuous at each point of $E$.
Then, for every sequence $\Suc{t_n}{n}$ in~$[0,1]$ converging to
a point~$t\in E$, we have $f(t)=\w$-$\lim f(t_n)$. 

Now suppose $\nu= \dens(L^1(\lambda)) \geq \nonSN$. Due to Maharam's Theorem (see \cite[p.~127, Theorem 9]{lacey2012isometric}),  $L^1(\lambda) $ contains an isometric copy of $L^1(\mu_\nu)$, where $\mu_\nu$ is the usual product probability measure on $\lbrace 0,1 \rbrace ^\nu$. Since $L^1(\mu_\nu)$ contains an isomorphic copy of $\ell_2( \nu)$ (see \cite[p.~128, Theorem 12]{lacey2012isometric}) and $\ell_2(\nu)$ does not have the WLP (Theorem \ref{c0lpWLP}), we conclude that $L^1(\lambda)$ does not have the WLP.
\end{proof}

Theorem \ref{WLPcountable} can be extended to arbitrary $\ell_1$-sums:

\begin{THEO}

\label{l1sumaWLP}

The arbitrary $\ell_1$-sum of a family of Banach spaces with the WLP has the WLP. 

\end{THEO}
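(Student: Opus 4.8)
The plan is to reduce the arbitrary $\ell_1$-sum case to the countable case already handled in Theorem~\ref{WLPcountable}. Let $\{X_i\}_{i\in\Gamma}$ be a family of Banach spaces with the WLP, put $X=(\bigoplus_{i\in\Gamma}X_i)_{\ell_1}$, and fix a Riemann integrable function $f\colon[0,1]\to X$. The first step is to apply Lemma~\ref{lemaprincipalaux} with the probability space being $\Gamma$ equipped with a purely atomic probability measure $\lambda$ giving mass $\lambda(\{i\})=w_i>0$ to each $i\in\Gamma$ (with $\sum_{i\in\Gamma}w_i=1$; this forces all but countably many $w_i$ to be arbitrarily small), and with $P_A$ the coordinate projection onto $A\subseteq\Gamma$. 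Conditions (1)--(3) of Lemma~\ref{lemaprincipalaux} are immediate, the third being exactly Lemma~\ref{lemaauxiliar}(1); condition (4) holds because $\|P_A(x)\|=\sum_{i\in A}\|\pi_i(x)\|$ is the tail of a convergent series and $\lambda(A)\to 0$ forces $A$ to consist of coordinates with small weight. Lemma~\ref{lemaprincipalaux} then yields a measurable set $E\subseteq[0,1]$ with $\mu(E)=1$ such that for every sequence $(t_n)_n$ converging to a point of $E$, the set $\{f(t_n):n\in\N\}$ is $\mathfrak{P}$-uniformly integrable.

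The second step extracts a countable ``essential support'' from this uniform integrability. For each $k\in\N$ choose, by condition (4) applied globally, $\delta_k>0$ so that $\lambda(A)<\delta_k$ implies $\sup_n\|P_A(f(t_n))\|\le 1/k$ along every sequence converging into $E$; only countably many coordinates have weight $\ge\delta_k$, so the set $\Gamma_0:=\bigcup_k\{i:w_i\ge\delta_k\}$ is countable. The point is that for $t\in E$ and $(t_n)_n\to t$, the mass of $f(t_n)$ living outside $\Gamma_0$ is uniformly negligible: $\sup_n\|P_{\Gamma\setminus\Gamma_0}(f(t_n))\|$ can be made arbitrarily small. One should also absorb into $\Gamma_0$ the (at most countably many) coordinates where the integrals $\int_C f(t)\,d\lambda$ are genuinely nonzero somewhere — or, more cleanly, observe that $g:=f$ viewed through the natural norm-one projection onto the closed $\ell_1$-subspace $Y:=(\bigoplus_{i\in\Gamma_0}X_i)_{\ell_1}$ differs from $f$ by a term whose norm is controlled by $\|P_{\Gamma\setminus\Gamma_0}(\cdot)\|$, hence weakly continuous at every point of $E$ for trivial reasons once we know the tail is small.

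The third step invokes Theorem~\ref{WLPcountable}: since $Y$ is a countable $\ell_1$-sum of spaces with the WLP, the composition $P_{\Gamma_0}\circ f\colon[0,1]\to Y$ — which is Riemann integrable because $P_{\Gamma_0}$ is an operator — is weakly continuous $\mu$-a.e., say on a full-measure set $E'\subseteq E$. Finally I would glue the two pieces: given $t\in E'$, $x^\ast\in X^\ast=(\bigoplus_{i\in\Gamma}X_i^\ast)_{\ell_\infty}$, and $\varepsilon>0$, split $x^\ast=x^\ast|_{\Gamma_0}+x^\ast|_{\Gamma\setminus\Gamma_0}$; the first part is handled by weak continuity of $P_{\Gamma_0}\circ f$ at $t$, and the second is dominated by $\|x^\ast\|\cdot\sup\{\|P_{\Gamma\setminus\Gamma_0}(f(t'))\|:t'\text{ near }t\}$, which is $<\varepsilon$ by the uniform integrability from step two (applied to sequences $t'\to t$). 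This shows $f$ is weakly continuous at $t$, so $f$ is weakly continuous a.e.

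The main obstacle I anticipate is step two: making precise that the full-space uniform integrability from Lemma~\ref{lemaprincipalaux} localizes to a fixed countable set of coordinates $\Gamma_0$ uniformly over all points $t\in E$ and all sequences approaching them. The subtlety is that Lemma~\ref{lemaprincipalaux} gives, for each $t$, a tail estimate depending a priori on $t$; one must check that the $\delta_k$'s can be chosen independently of $t$ (they can, since condition (4) of Lemma~\ref{lemaprincipalaux} is a statement about $X$ with no reference to $f$, and the set $A$ witnessing small measure can be taken to be a fixed cofinite-weight tail), so that the single countable set $\Gamma_0$ works simultaneously for the whole full-measure set $E$. Once that uniformity is in hand, the reduction to Theorem~\ref{WLPcountable} and the final gluing are routine.
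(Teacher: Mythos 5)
Your first step already fails for set-theoretic reasons: if $\Gamma$ is uncountable, there is no probability measure on $\Gamma$ with $\lambda(\lbrace i\rbrace)=w_i>0$ for every $i\in\Gamma$, since an uncountable sum of strictly positive reals diverges (all but countably many $w_i$ would have to be zero, not merely ``arbitrarily small''). You cannot drop the positivity either: condition (4) of Lemma \ref{lemaprincipalaux}, applied to a nonzero vector $x$ supported on a single coordinate $i$ with $\lambda(\lbrace i\rbrace)=0$, would require $\|P_{\lbrace i\rbrace}(x)\|=\|x\|>0$ to be small while $\lambda(\lbrace i\rbrace)=0$, so (4) forces every atom to carry positive mass. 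Hence Lemma \ref{lemaprincipalaux} cannot be applied to the full uncountable index set at all; this is precisely why the paper does not use it in this theorem and argues directly instead.

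The second gap is the uniformity issue you identify and then dismiss on incorrect grounds. Even if one had some pointwise uniform-integrability statement, its rate depends on the point $t$ and the approximating sequence; your claim that the $\delta_k$ can be chosen independently of $t$ because condition (4) ``is a statement about $X$ with no reference to $f$'' is a non sequitur, since (4) gives a rate depending on the individual vector $x$ with no uniformity over $x$. Producing a \emph{single} countable set $J\subset\Gamma$ such that $\|P_{J^c}(f(t))\|$ is small for almost every $t$ is the real content of the theorem, and the paper proves it as a separate Claim: after subtracting the countably supported part $P_Lf$ determined by the values of $f$ at the rationals (so that one may assume $f$ vanishes on a dense set and has integral zero), it shows by contradiction — choosing points $\xi_j,\xi_j'$ interval by interval in a partition and using Lemma \ref{lemaauxiliar}(1) to accumulate norm against the Cauchy criterion for the Riemann integral — that for each $n$ there is a countable $J$ with $\mu(\overline{A_n^J})=0$, where $A_n^J=\lbrace t: \|P_{J^c}(f(t))\|\geq 1/n\rbrace$. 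Only then does Theorem \ref{WLPcountable} apply to $P_Jf$, with $P_{J^c}f$ norm continuous a.e. Your outline contains no substitute for this argument, so the reduction to the countable case is not established.
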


\begin{proof}

The proof uses some ideas of \cite{MiragliaRocha84}.
Let $f:[0,1] \rto X:=(\bigoplus_{i \in \Gamma } X_i )_{\ell_1}$ be a Riemann integrable function, where $\lbrace X_i \rbrace_{i \in \Gamma} $ is a family of Banach spaces with the WLP.
For each $J \subset \Gamma$, we denote by $P_J : X \rto X$ the function defined by $\pi_i \left( P_J (x) \right) =\pi_i(x)$ if $i \in J$ and $\pi_i \left( P_J(x) \right)=0$ in any other case. 
Let $\Suc{r_n}{n}$ be an enumeration of the rational numbers in $[0,1]$ and fix a countable set $L \subset \Gamma$ such that $ P_L(f(r_n))=f(r_n)$ for every $n \in \N$.
Then, $f=(f-P_L f)+P_L f$. Since $P_L f$ is Riemann integrable and takes values in the space 
$$X|_L :=\lbrace x \in X: \pi_i(x)=0 \mbox{ for each } i \notin L \rbrace,$$ which is isomorphic to a countable $\ell_1$-sum of spaces with the WLP, by Theorem \ref{WLPcountable} $P_L f$ is weakly continuous almost everywhere.

Therefore, we can assume that $\int_0^1 f(t)dt =0$ and that $f$ is null over a dense set.
Let 
$$ A_n^J := \lbrace t \in [0,1] \colon \| P_{J^c}(f(t)) \| \geq \frac{1}{n} \rbrace $$
 for each $n \in \N$ and each subset $J \subset \Gamma$.
If $J_1 \subset J_2 \subset \Gamma$, then $A_n ^{J_2} \subset A_n^{J_1}$.

\textbf{Claim:} \textit{For every $n \in \N$ there exists a countable set $J \subset \Gamma$ with $\mu\( \overline{A_n^J}\)=0$.}
Suppose this is not the case. Then, there exist $n\in \N$ and $\delta >0$ with $\mu \( \overline{A_n^J}\)>\delta$ for every countable subset $J \subset \Gamma$ (if for every $m \in \N$ we can take a countable set $J_m \subset \Gamma$ with $\mu \( \overline{A_n^{J_m}}\)<\frac{1}{m}$, then $J=\bigcup_{m \in \N} J_m$ verifies $\mu \( \overline{ A_n^J } \)=0$).
Let $\PP= \lbrace I_1,I_2,\dots,I_N \rbrace $ be a partition of $[0,1]$ such that
\begin{equation}\label{estrella}
 \norm[\bigg]{ \sum \limits_{j=1}^N \mu(I_j)(f(\xi_j)-f(\xi_j')) } < \frac{\delta}{n} \mbox{ for all choices } \xi_j, \xi_j' \in I_j, 1 \leq j \leq N.
\end{equation}
Let $J \subset \Gamma$ be a countable subset. Since $\sum \limits_{j=1}^N \mu  \(I_j \cap \overline{A_n ^J}\)= \mu \(  \overline{A_n ^J} \) > \delta$ and $f$ is null over a dense set, we can suppose that there exist $\xi_1 \in \Int (I_1) \cap A_n ^J$ and $ \xi_1' \in I_1$ such that $ \| \mu(I_1)(f(\xi_1)-f(\xi_1')) \|  \geq \frac{1}{n} \mu(I_1) $. Let $J_1= \supp{ f(\xi_1)} \cup \supp{ f(\xi_1')}$.
By (\ref{estrella}) we have $\mu(I_1) < \delta < \sum_{j=1}^N \mu \(I_j \cap \overline{A_n^{J_1}}\)$ and so it is not restrictive to suppose $ \Int (I_2) \cap \overline{A_n ^{J_1}} \neq \emptyset$. Thus, due to Lemma \ref{lemaauxiliar}, we can choose $ \xi_2, \xi_2' \in I_2 $ such that 
$$\| \mu(I_1)(f(\xi_1)-f(\xi_1'))  + \mu(I_2)(f(\xi_2)-f(\xi_2'))  \| \geq \frac{1}{n}( \mu(I_1)+ \mu(I_2)).$$
Fix $1 \leq k < N$ and assume that we have already chosen points $\xi_j, \xi_j' \in I_j $ for all $1\leq j \leq k$ with the property that $$ \norm[\bigg]{ \sum \limits_{j=1}^k \mu (I_j)( f(\xi_j)-f(\xi_j')) } \geq \frac{1}{n} \( \sum \limits_{j=1}^k \mu(I_j)\).$$
Set $J_k:= \bigcup_{j=1}^k \supp{f(\xi_j)}\cup\supp{f(\xi_j')}$, which is countable. By (\ref{estrella}) we have  $\sum \limits_{j=1}^k \mu(I_j) < \delta < \sum_{j=1}^N \mu \( I_j \cap \overline{ A_n^{J_k}} \)$, hence it is not restrictive to suppose that $\Int(I_{k+1}) \cap \overline{A_n ^{J_k}} \neq \emptyset$ and therefore that there exist points $\xi_{k+1}, \xi_{k+1}' \in  I_{k+1}$ such that $$ \norm[\bigg]{ \sum \limits_{j=1}^{k+1} \mu (I_j) (f(\xi_j)-f(\xi_j')) } \geq \frac{1}{n} \( \sum \limits_{j=1}^{k+1} \mu(I_j)\).$$
Since $ \sum \limits_{j=1}^{N} \mu(I_j)=1 > \delta$, it follows that there exist $\xi_j, \xi_j' \in I_j$ for every $1 \leq j \leq N$ such that $$ \norm[\bigg]{ \sum \limits_{j=1}^{N} \mu (I_j) (f(\xi_j)-f(\xi_j')) } \geq \frac{\delta}{n}.$$ But this is a contradiction with (\ref{estrella}). Therefore, the \textbf{Claim} is proved.

Thus, for every $n \in \N$ there exists a countable set $J_n$ such that $\mu \( \overline{ A_n ^{J_n}} \)=0$. Fix $J:= \bigcup_{n \in \N} J_n$.
Theorem \ref{WLPcountable} guarantees the existence of a set $F \subset [0,1]$ of measure one such that $P_J(f)$ is weakly continuous at every point of $F$.
Let $E=F\setminus \bigcup_{n \in \N} \overline{A_n^J}$. Then, $\mu(E)=1$, $f=P_J(f)+P_{J^c}(f)$, $P_J(f)$ is weakly continuous at each point of $E$ and $P_{J^c}(f)$ is norm continuous at each point of $E$ (if $t_n \rto t \in E$, then, for every $m \in \N$, $ t_n \notin A_m^J $ for $n$ big enough so $\|P_{J^c}(f) (t_n) \| < \frac{1}{m}$).
\end{proof}

\begin{CORO}[\cite{Pizzotti89, zbMATH03406284}]
$\ell_1 (\kappa)$ has the LP for any cardinal $\kappa$.
\end{CORO}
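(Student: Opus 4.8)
The plan is to deduce this from Theorem~\ref{l1sumaWLP} together with the Schur property of $\ell_1(\kappa)$. First I would write $\ell_1(\kappa)=(\bigoplus_{i\in\kappa}\R)_{\ell_1}$ and observe that each summand $\R$, being finite-dimensional, has the LP by the classical Lebesgue criterion, so in particular it has the WLP. Theorem~\ref{l1sumaWLP} then yields that $\ell_1(\kappa)$ has the WLP; that is, every Riemann integrable $f\colon[0,1]\rto\ell_1(\kappa)$ is weakly continuous almost everywhere.

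The second step is to upgrade weak continuity to norm continuity by exploiting the Schur property. If $f$ is weakly continuous at $t$ and $t_n\to t$ in $[0,1]$, then $f(t_n)\to f(t)$ weakly in $\ell_1(\kappa)$. Now any weakly convergent sequence $(x_n)$ in $\ell_1(\kappa)$ has $\bigcup_n\supp(x_n)$ countable, hence lies in a subspace isometric to $\ell_1$ (or to a finite-dimensional space), which has the classical Schur property; therefore $(x_n)$ converges in norm. Applying this with $x_n=f(t_n)-f(t)$ shows that $f$ is norm continuous at every point where it is weakly continuous, so $f$ is norm continuous almost everywhere and $\ell_1(\kappa)$ has the LP.

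I do not expect a genuine obstacle here: the argument is just a combination of Theorem~\ref{l1sumaWLP}, Lebesgue's criterion in dimension one, and the reduction of the Schur property of $\ell_1(\kappa)$ to that of $\ell_1$ via the countability of the union of the supports of the terms of a sequence. The only point deserving explicit mention is this last reduction.
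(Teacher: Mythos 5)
Your proposal is correct and follows essentially the same route as the paper: obtain the WLP of $\ell_1(\kappa)$ from Theorem~\ref{l1sumaWLP} and then use the Schur property to upgrade weak continuity a.e.\ to norm continuity a.e. The paper simply cites the Schur property of $\ell_1(\kappa)$ without proof, whereas you spell out the standard reduction to $\ell_1$ via countable supports; this is a fine (and correct) addition but not a different argument.
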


\begin{proof}
Since $\ell_1( \kappa)$ has the Schur property, $\ell_1(\kappa)$ has the LP if and only it has the WLP. 
Therefore, the conclusion follows from Theorem \ref{l1sumaWLP}.
\end{proof}

As an application of \ref{l1sumaWLP} we also obtain the following result:

\begin{CORO}
\label{dualC(K)WLP}

Let $K$ be a compact Hausdorff space. Then, $\CC(K)^\ast$ has the WLP if $\dens ( L^1( \lambda)) < \covM$ for every regular Borel probability $\lambda$ on $K$.

\end{CORO}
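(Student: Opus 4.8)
The plan is to leverage the structure theory of $\mathcal{C}(K)^\ast$ as an $\ell_1$-sum of $L^1$-spaces, and then invoke Theorem \ref{l1sumaWLP} together with Theorem \ref{WLPL1}. Recall the classical representation: for a compact Hausdorff space $K$, the dual $\mathcal{C}(K)^\ast$ is isometrically isomorphic to the space $M(K)$ of regular Borel measures on $K$, and by a maximal disjoint family argument (a Zorn's lemma exhaustion), there is a family $\{\lambda_i\}_{i\in \Gamma}$ of mutually singular nonnegative regular Borel measures on $K$ such that every $\mu \in M(K)$ decomposes uniquely as $\mu = \sum_{i\in\Gamma} \mu_i$ with each $\mu_i$ absolutely continuous with respect to $\lambda_i$, and $\|\mu\| = \sum_{i\in\Gamma}\|\mu_i\|$. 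By the Radon--Nikod\'ym theorem, the band generated by $\lambda_i$ is isometrically isomorphic to $L^1(\lambda_i)$. Hence
$$
\mathcal{C}(K)^\ast \cong \Bigl(\bigoplus_{i\in\Gamma} L^1(\lambda_i)\Bigr)_{\ell_1}.
$$
We may normalize each $\lambda_i$ to be a probability measure (rescaling does not affect the isometry type of $L^1(\lambda_i)$).

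First I would establish this $\ell_1$-sum decomposition carefully, citing the standard reference (e.g. the band decomposition of $M(K)$, or Lacey's book on isometric theory of Banach spaces). The key point is that the measures $\lambda_i$ can be taken to be regular Borel \emph{probability} measures on $K$, so that the hypothesis ``$\dens(L^1(\lambda)) < \covM$ for every regular Borel probability $\lambda$ on $K$'' applies to each $\lambda_i$. Next, by Theorem \ref{WLPL1}(1), each summand $L^1(\lambda_i)$ has the WLP, since $\dens(L^1(\lambda_i)) < \covM$ by hypothesis. Finally, by Theorem \ref{l1sumaWLP}, the arbitrary $\ell_1$-sum $\bigl(\bigoplus_{i\in\Gamma} L^1(\lambda_i)\bigr)_{\ell_1}$ has the WLP, and since the WLP passes through isometric (indeed isomorphic) identifications, $\mathcal{C}(K)^\ast$ has the WLP.

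I expect the main obstacle to be a clean justification of the band decomposition $M(K) \cong \bigl(\bigoplus_i L^1(\lambda_i)\bigr)_{\ell_1}$ with the $\lambda_i$ genuinely probability measures on $K$ (as opposed to, say, measures on various measurable subsets, or $\sigma$-finite rather than finite measures). The Zorn's lemma argument producing a maximal family of mutually singular probability measures is routine, but one must check that the residual part is zero: if $\mu$ were singular with respect to every $\lambda_i$ and nonzero, then $\mu/\|\mu\|$ (or its total variation, normalized) could be adjoined to the family, contradicting maximality. Care is also needed because $\Gamma$ may be uncountable, but this is exactly why Theorem \ref{l1sumaWLP} (the \emph{arbitrary} $\ell_1$-sum version) rather than the countable Theorem \ref{WLPcountable} is the right tool. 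Everything else is a direct chain of implications through the results already proved.
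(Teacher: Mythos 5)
Your proposal is correct and follows essentially the same route as the paper: decompose $\CC(K)^\ast$ isometrically as an $\ell_1$-sum of spaces $L^1(\lambda_i)$ with each $\lambda_i$ a regular Borel probability measure on $K$, apply Theorem \ref{WLPL1}(1) to each summand using the density hypothesis, and conclude via Theorem \ref{l1sumaWLP}. The paper simply cites the standard reference for the band decomposition where you sketch the Zorn's lemma argument, but the substance is identical.
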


\begin{proof}

For every compact Hausdorff space $K$, the Banach space $\CC(K) ^\ast$ is isometric to a $\ell_1$-sum of $L^1( \lambda)$ spaces, where each $\lambda$ is a regular Borel probability measure on $K$ (see the proof of \cite[Proposition~4.3.8]{opac-b1119539}).
Thus, $\CC(K)^\ast$ has the WLP if each space $L^1( \lambda)$ has the WLP, due to Theorem \ref{l1sumaWLP}.
Hence, the result follows from Theorem \ref{WLPL1}. 
\end{proof}

\begin{CORO}
\label{COROMS}
If $K$ is a compact Hausdorff space in the class $MS$ (i.e. $L^1(\lambda)$ is separable for every regular Borel probability on $K$), then $\CC(K)^\ast$ has the WLP.
\end{CORO}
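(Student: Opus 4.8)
The plan is to derive this immediately from Corollary \ref{dualC(K)WLP}, whose hypothesis is that $\dens(L^1(\lambda)) < \covM$ for every regular Borel probability $\lambda$ on $K$. So the only thing to check is that membership of $K$ in the class $MS$ forces this cardinal inequality for every such $\lambda$.

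First I would unravel the definition of $MS$: saying $K \in MS$ means precisely that $L^1(\lambda)$ is separable for every regular Borel probability measure $\lambda$ on $K$. Separability of a Banach space is the statement that its density character is at most $\aleph_0$, so $\dens(L^1(\lambda)) \leq \aleph_0$ for every such $\lambda$.

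Next I would invoke the ZFC inequality $\aleph_1 \leq \covM$ recorded in the Terminology and Preliminaries section (the full chain there is $\aleph_1 \leq \covM \leq \nonSN \leq \mathfrak{c}$). Combining this with the previous step gives $\dens(L^1(\lambda)) \leq \aleph_0 < \aleph_1 \leq \covM$ for every regular Borel probability $\lambda$ on $K$, which is exactly the hypothesis of Corollary \ref{dualC(K)WLP}. Applying that corollary yields that $\CC(K)^\ast$ has the WLP, completing the proof.

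There is essentially no obstacle here: the content is entirely in Corollary \ref{dualC(K)WLP} (and behind it Theorems \ref{WLPL1} and \ref{l1sumaWLP}), and the present statement is just the instance of that corollary obtained by noting that separable spaces trivially satisfy the density-character bound because $\covM$ is uncountable in ZFC. The only point worth stressing in the writeup is that no extra set-theoretic assumption is needed precisely because $\covM \geq \aleph_1$ holds outright.
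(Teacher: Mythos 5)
Your proposal is correct and matches the paper's intent exactly: the paper states this corollary without proof, treating it as the immediate specialization of Corollary \ref{dualC(K)WLP} obtained because separability gives $\dens(L^1(\lambda)) \leq \aleph_0 < \aleph_1 \leq \covM$. Nothing further is needed.
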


Some classes of compact spaces in the class $MS$ are metric compacta, Eberlein compacta, Radon-Nikod\'{y}m compacta, Rosenthal compacta and scattered compacta. For more details on this class, we refer the reader to \cite{DzamonjaKunen}, \cite{MarciszewskiPlebanek} and \cite{PlebanekSobota}.

The LP is a three-space property, i.e. if $X$ is a Banach space and $Y$ is a subspace of $X$ such that $Y$ and $X/Y$ have the LP, then $X$ has the LP  \cite[Proposition 1.19]{Pizzotti89}.  This result follows from Michael's Selection Theorem.
However, as far as we are concerned, it is not known whether the WLP is a three-space property. We have a positive result in the following case:

\begin{THEO}
Let $X$ be a Banach space and $Y$ a subspace of $X$. If $Y$ is reflexive, $\dens(Y) < \covM$ and $X/Y$ has the WLP, then $X$ has the WLP.
\end{THEO}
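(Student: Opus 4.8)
The strategy is to reduce the problem to the already-established facts by quotienting out $Y$, handling the $Y$-component with reflexivity, and handling the quotient with a lifting argument. Let $f\colon[0,1]\to X$ be Riemann integrable; we want $f$ weakly continuous a.e. First I would push $f$ down: let $q\colon X\to X/Y$ be the quotient map. Since $q$ is an operator, $q\circ f$ is Riemann integrable into $X/Y$, which has the WLP, so there is a set $E_1$ of full measure at every point of which $q\circ f$ is weakly continuous. This controls $f$ modulo $Y$, but to recover weak continuity of $f$ itself I must produce, near each such point, a nearby ``correction'' lying in $Y$ with good continuity behaviour.

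The key device is a family of operators on $X$ adapted to $Y$ via a boundedly complete decomposition or an $\ell_1$-type structure — but since here $Y$ is merely reflexive and $\dens(Y)<\covM$, a cleaner route is available: apply Theorem \ref{dualWLP}-style reasoning to $Y$. Concretely, pick a dense set $\{y_i^\ast\}_{i\in I}$ in $Y^\ast$ with $|I|<\covM$, extend each $y_i^\ast$ to $x_i^\ast\in X^\ast$ by Hahn--Banach, and consider the scalar Riemann integrable functions $x_i^\ast f$. Each has a discontinuity set that is a countable union of closed null sets, so the union $E_2$ of all these discontinuity sets is null because $|I|<\covM$. On $E_1^c\cap E_2^c$ I would argue as follows: weak continuity of $q\circ f$ at $t$ means that for every $z^\ast\in (X/Y)^\ast = Y^\perp\subset X^\ast$, $z^\ast f$ is continuous at $t$; combined with continuity of each $x_i^\ast f$ at $t$, and the fact that $\{x_i^\ast\}_{i}\cup Y^\perp$ — or rather its closed linear span together with the range of $q^\ast$ — is norm-dense in a norming enough subspace of $X^\ast$, one deduces that $x^\ast f$ is continuous at $t$ for all $x^\ast\in X^\ast$ (using boundedness of $f$ to pass from a dense set of functionals to all of them, exactly as in the proof of Theorem \ref{dualWLP}).

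The point where reflexivity of $Y$ is genuinely needed, and the main obstacle, is justifying that $\{x_i^\ast : i\in I\}$ together with $Y^\perp$ norms $X$ in a way strong enough for the density argument: an arbitrary $x^\ast\in X^\ast$ restricts to $Y$ as an element of $Y^\ast$, which is approximated in norm by some $y_i^\ast$; then $x^\ast - x_i^\ast$ is small on the unit ball of $Y$, but to say it is ``essentially'' in $Y^\perp$ one needs that $Y$ is complemented in its bidual in the relevant sense — which holds precisely because $Y$ is reflexive, so $Y^{\perp\perp}=Y$ and the restriction map $X^\ast\to Y^\ast$ is onto with the quotient $X^\ast/Y^\perp\cong Y^\ast$ isometrically. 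I would make this precise by writing $x^\ast f(t')-x^\ast f(t) = (x^\ast - x_i^\ast)f(t') - (x^\ast-x_i^\ast)f(t) + x_i^\ast f(t') - x_i^\ast f(t)$ and, using that $\|(x^\ast-x_i^\ast)|_Y\|$ is small while $f(t'),f(t)$ have bounded distance to $Y$ controlled by the (weakly continuous, hence locally bounded-oscillation) behaviour of $q\circ f$ near $t$, bound the first two terms by $\varepsilon$. The remaining term is small by continuity of $x_i^\ast f$ at $t$. Thus $f$ is weakly continuous on the full-measure set $E_1^c\cap E_2^c$, which completes the proof. The delicate bookkeeping is entirely in the interplay of the three error terms and in choosing the dense set in $Y^\ast$ small enough; everything else is routine.
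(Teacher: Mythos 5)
Your route is genuinely different from the paper's. The paper lifts the quotient map $Q\colon X\to X/Y$ by a continuous selection $\phi$ (Michael's Selection Theorem), shows that the set $C$ of all weak cluster values of $f$ along sequences converging into the good set is contained in $\phi(Qf(E))+Y$ and hence has density character below $\covM$, chooses a separating family of that size, and then uses that $Q$ is a \emph{Tauberian} operator (this is where reflexivity enters there) to pull relative weak compactness of $\{Qf(t_n)\}$ back to $\{f(t_n)\}$ and conclude by uniqueness of subsequential weak limits. Your argument avoids selections, Tauberian operators and weak compactness altogether: the set $D=\bigcup_{i\in I}\left(x_i^\ast+Y^\perp\right)$ is norm dense in $X^\ast$ (because $\operatorname{dist}(x^\ast-x_i^\ast,Y^\perp)=\|(x^\ast-x_i^\ast)|_Y\|_{Y^\ast}$, which is pure Hahn--Banach), every $u^\ast=x_i^\ast+z^\ast\in D$ has $u^\ast f$ continuous at each point of $E_1\cap E_2^c$ (the $z^\ast$ part because $z^\ast f=\zeta^\ast(Qf)$ for the $\zeta^\ast\in(X/Y)^\ast$ with $Q^\ast\zeta^\ast=z^\ast$ and $Qf$ is weakly continuous there; the $x_i^\ast$ part by the choice of $E_2$), and the $3\varepsilon$ argument of Theorem \ref{dualWLP}, using boundedness of $f$, upgrades this to continuity of $x^\ast f$ for every $x^\ast\in X^\ast$. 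Done this way the proof is correct, is arguably more elementary, and actually yields the formally stronger statement in which ``$Y$ reflexive and $\dens(Y)<\covM$'' is replaced by ``$\dens(Y^\ast)<\covM$''.

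Two points in your writeup do need repair. First, you have located the use of reflexivity in the wrong place: the isometry $X^\ast/Y^\perp\cong Y^\ast$ and the surjectivity of the restriction map $X^\ast\to Y^\ast$ hold for \emph{every} closed subspace, with no hypothesis on $Y$. Where reflexivity is genuinely needed is the step you state without justification, namely the existence of a dense subset of $Y^\ast$ of cardinality less than $\covM$: the hypothesis only gives $\dens(Y)<\covM$, and $\dens(Y^\ast)=\dens(Y)$ holds because $\dens(Y)\leq\dens(Y^\ast)\leq\dens(Y^{\ast\ast})=\dens(Y)$ for reflexive $Y$; for $Y=\ell_1$ one has $\dens(Y)=\aleph_0$ but $\dens(Y^\ast)=\mathfrak{c}$, so this step can fail without reflexivity. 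Second, the error estimate in your closing sentences does not close as stated: $\|x^\ast-x_i^\ast\|$ is only bounded (the extension approximates $x^\ast$ on $Y$, not on $X$), and $\operatorname{dist}(f(t'),Y)=\|Qf(t')\|$ is only bounded, since weak continuity of $Qf$ gives no norm control; the product of these two terms is $O(1)$, not $o(1)$. You must instead split $x^\ast-x_i^\ast=z^\ast+w^\ast$ with $z^\ast\in Y^\perp$ and $\|w^\ast\|<\varepsilon$ --- possible exactly because $\operatorname{dist}(x^\ast-x_i^\ast,Y^\perp)<\varepsilon$ --- and absorb $z^\ast$ into the weak continuity of $Qf$ at $t$; this is the version of the argument you correctly sketch in the middle of your proposal, and it is the one that should be retained.
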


\begin{proof}

Let $Q:X \rto X/Y$ be the quotient operator and $\phi : X/Y \rto X$ be a norm-norm continuous map such that $Q \phi = 1_{X/Y}$ given by Michael's Selection Theorem (see \cite[Section 7.6]{fabian2011banach}). 
Let $f \colon [0,1] \rto X$ be a Riemann integrable function. Then, since $Qf$ is Riemann integrable and $X/Y$ has the WLP, there exists a set $E \subset [0,1]$ with $\mu(E)=1$ such that $Qf$ is weakly continuous at every point of $E$.
Set 
\begin{equation}\label{estrella2}
C = \lbrace x \in X \colon \exists \Suc{t_n}{n} \mbox{ converging to some }t\in E \mbox{ with } x=\w \mbox{-}\lim f(t_n) \rbrace.
\end{equation} 

First we are going to see that $\dens(C) < \covM$. Let $x\in C$ and $\Suc{t_n}{n}$ as in (\ref{estrella2}). Then $Qx= \w $-$\lim Qf(t_n)=Qf(t)$. Therefore, $x= \phi(Qx) + (x- \phi(Qx))$ with $\phi(Qx) \in \phi(Qf(E))$ and $x-\phi(Qx) \in Y$.
Notice that $\phi(Qf(E))$ is separable because of the $\w$-separability of $Qf(E)$ and Mazur's Lemma.  
Thus, $C \subset \phi(Qf(E))+Y$ satisfies $\dens(C)<\covM$. 

Let $\lbrace x_\alpha^\ast \rbrace_{\alpha \in \Gamma} \subset X^\ast$ be a set separating points of $C$ with $|\Gamma| < \covM$. Set $E_0 \subset E$ with $\mu(E_0)=1$ such that $x_\alpha^\ast \circ f$ is continuous at every point of $E_0$ for every $\alpha \in \Gamma$. Notice that this can be done because the set of discontinuity points of each $x_\alpha^\ast \circ f$ is an $F_\sigma$ Lebesgue null set and $|\Gamma| < \covM$.  
We claim that $f$ is weakly continuous at each point of $E_0$. Let $t \in E_0$ and $\Suc{t_n}{n}$ be a sequence converging to $t$. Since $Qf(t)= \w$-$\lim Qf(t_n)$, the set $\lbrace Qf(t_n) \colon n \in \N \rbrace$ is relatively weakly compact in $X/Y$. From the reflexivity of $Y$, it follows that $Q$ is a Tauberian operator, so $\lbrace f(t_n) \colon n \in \N \rbrace$ is relatively weakly compact in $X$ (see \cite[Theorem 2.1.5 and Corollary 2.2.5]{Gonzalez}).
Therefore, it is enough to prove the uniqueness of the limit of the subsequences of $\Suc{f(t_n)}{n}$. Let $x= \w$-$\lim \limits_k f(t_{n_k})$. Then, $x, f(t) \in C$ and $x_\alpha^\ast (x) = \lim \limits_k x_\alpha ^\ast (f(t_{n_k})) = x_\alpha ^\ast (f(t)) $ for every $\alpha \in \Gamma$, so $x=f(t)$.
\end{proof}

\section{Weak continuity does not imply integrability}

It is not true that every weakly continuous function is Riemann integrable \cite{MR0043366}. In fact, V. Kadets proved the following theorem:

\begin{THEO}[\cite{Kadets1994}]
\label{teoKadets}

If $X$ is a Banach space without the Schur property, then there is a weakly continuous function $f: [0,1] \rto X$ which is not Riemann integrable.

\end{THEO}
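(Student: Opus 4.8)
**The plan is to construct the function directly using a sequence of "bad" vectors coming from the failure of the Schur property.** Since $X$ lacks the Schur property, there is a sequence $(x_n)$ in $X$ with $\norm{x_n}=1$ for all $n$ but $x_n \to 0$ weakly. The idea is to place these vectors on a nowhere dense set in such a way that the resulting function is weakly continuous (because the $x_n$ die weakly and accumulate only at points where the function is $0$) while simultaneously spoiling Riemann integrability (because the norms stay bounded below, so Riemann sums over fine partitions cannot be controlled).

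First I would fix a closed nowhere dense set of positive measure, say a fat Cantor set $K\subset[0,1]$ with $\mu(K)>0$, and enumerate a countable dense collection of the "gap" components. Actually, a cleaner approach: let $(I_k)$ be a sequence of pairwise disjoint open intervals whose union is dense in $[0,1]$ and whose total length is $1$ (the complementary intervals of a measure-zero Cantor set will not do — I need the tags to be spreadable, so I will instead work with the complementary intervals of a nowhere dense set and pick one point $s_k \in I_k$ in each). Define $f(s_k) = x_k$ and $f(t)=0$ for every $t \notin \{s_k : k \in \N\}$. Then $f$ is weakly continuous at every point: at a point $t$ which is not one of the $s_k$, any sequence $t_m \to t$ eventually has $f(t_m)$ equal to $0$ or to some $x_{k(m)}$ with $k(m)\to\infty$ (since the $s_k$ accumulate at $t$ only through infinitely many distinct indices, as each $I_k$ contains only one tag), hence $x^\ast f(t_m) \to 0 = x^\ast f(t)$ for every $x^\ast \in X^\ast$; and the points $s_k$ themselves can be arranged to form a discrete set, or one handles them by a small perturbation. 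The one subtlety is ensuring that no $s_k$ is an accumulation point of the other $s_j$'s in a way that destroys weak continuity there — but since $x_j \to 0$ weakly, weak continuity at $s_k$ only requires $x^\ast f(t_m) \to x^\ast(x_k)$, which fails; so in fact I must be slightly more careful and either accept that $f$ is weakly continuous only off a countable (hence null) set — but the theorem asks for weak continuity everywhere.

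So I would refine the construction: choose the intervals $I_k$ and tags $s_k$ so that $\{s_k\}$ is a closed discrete subset of the open set $\bigcup_k I_k$ (possible since each $I_k$ contributes one isolated tag and the $I_k$ are disjoint open intervals), whence each $s_k$ is an isolated point of the domain relative to $\supp f$, making $f$ trivially weakly continuous at $s_k$; and $f$ is weakly continuous at every point of the closed nowhere dense complement $[0,1]\setminus\bigcup_k I_k$ by the argument above. For non-Riemann-integrability, I would invoke the Cauchy criterion quoted in the excerpt: given any partition $\PP$ of $[0,1]$, since $\bigcup_k I_k$ is dense, every subinterval of $\PP$ of positive length contains some tag $s_k$; one tagged refinement puts the tag at $s_k$ (contributing a vector of norm $1$ scaled by the interval length) and another puts it at a point where $f=0$. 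Summing over the intervals and using a functional that separates the selected $x_k$'s — or more simply a lower bound on $\norm{\sum \mu(I) x_{k(I)}}$ — one shows the two Riemann sums differ by a fixed amount bounded away from $0$, independently of $\PP$.

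\textbf{The main obstacle} is the last point: getting a lower bound on $\norm{\sum_j \mu(J_j) x_{k_j}}$ for an arbitrary finite selection of distinct indices $k_j$, when all one knows about $(x_n)$ is that it is weakly null with $\norm{x_n}=1$. A generic weakly null sequence of unit vectors need not be basic, and such sums could a priori have small norm. I expect the fix is to pass first to a subsequence of $(x_n)$ that is a basic sequence (by the Bessaga–Pełczyński selection principle, any weakly null normalized sequence has a basic subsequence) with basis constant $\lambda$; then for a block of the form $\sum_j c_j x_{k_j}$ with $c_j \geq 0$ one has $\norm{\sum_j c_j x_{k_j}} \geq \frac{1}{2\lambda}\max_j c_j \geq \frac{1}{2\lambda}\max_j \mu(J_j)$, which for a sufficiently fine partition is still not obviously bounded below since $\max_j \mu(J_j)\to 0$. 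So instead I would exploit that only \emph{one} coordinate needs to survive: fix the single index $k_1$ with $J_{j_0}$ the longest interval containing $s_{k_1}$, and compare the Riemann sum tagging $s_{k_1}$ there against the one tagging a null point there, all other tags held fixed; the difference is exactly $\mu(J_{j_0}) x_{k_1}$ in the first coordinate of the basis, with norm $\geq \frac{1}{2\lambda}\mu(J_{j_0})$ — still shrinking. The genuine resolution is that one cannot use a fixed $\varepsilon$ at \emph{all} scales; rather, for the Cauchy criterion one negates it: given any $\PP$, since some interval $J$ of $\PP$ has $\mu(J) \geq 1/|\PP|$ and contains a tag, one produces two taggings with sums differing by at least $c/|\PP|$ — which does NOT contradict integrability. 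Hence the real construction must make the discontinuity "persistent across all partitions simultaneously," which is precisely the Alexiewicz–Orlicz idea: arrange that over \emph{every} partition, a definite \emph{proportion} of the total length $1$ is covered by intervals carrying a forced unit-norm tag, using that $\bigcup I_k$ has full measure. I would therefore make $\sum_k \mu(I_k \text{ on which a tag can be isolated and spread}) = 1$ and show that for any partition, the sum of lengths of intervals meeting the relevant tag-set exceeds $1/2$, forcing Riemann-sum oscillation $\geq c/2$ via Lemma-type estimates on $\norm{\sum \mu(J_j)(\pm x_{k_j})}$ with \emph{signs}, where using $x$ and $-x$ (both values of $f$ after symmetrizing) and a separating functional gives a clean lower bound. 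Making this quantitative bound uniform is the crux; everything else is routine.
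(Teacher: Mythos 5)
There are two genuine gaps here. First, your function is not weakly continuous at the tag points $s_k$: you set $f(s_k)=x_k$ with $\|x_k\|=1$ while $f$ vanishes on a deleted neighbourhood of $s_k$, so any sequence $t_m\to s_k$ avoiding the $s_j$ gives $x^\ast f(t_m)=0\not\to x^\ast(x_k)$ for any functional with $x^\ast(x_k)\neq 0$. The claim that $s_k$ being ``an isolated point of the domain relative to $\supp f$'' makes $f$ weakly continuous there is false: continuity at $s_k$ is tested against all nearby points of $[0,1]$, not only those in the support, and $[0,1]$ has no isolated points, so no choice of tags repairs this; a countable exceptional set is not permitted by the statement. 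The standard fix (Kadets', reproduced in the proof of Theorem \ref{THEODP}) is to replace point masses by continuous scalar profiles: on each deleted interval $I^n_k$ of a fat Cantor set $K$ with $\mu(K)>\tfrac23$ put $f(t)=g_{I^n_k}(t)\,x_n$, where $g_{I}$ is a rescaled copy of a fixed continuous $g$ with $g(0)=g(1)=0$. Then $f$ is norm continuous off $K$, and weakly continuous at points of $K$ because the profiles vanish at the endpoints of the deleted intervals and $(x_n)$ is weakly null.

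Second, and more seriously, you explicitly leave the non-integrability estimate unproved (``Making this quantitative bound uniform is the crux''), and your diagnosis of the obstacle is correct: $\bigl\|\sum_j\mu(J_j)x_{k_j}\bigr\|$ with \emph{distinct} indices $k_j$ cannot be bounded below in general (take $x_n=e_n$ in $\ell_2$), and a single separating functional is useless against a weakly null sequence once the indices are large. The missing idea is to index the vectors by the \emph{generation} of the Cantor construction rather than by the individual interval: given any partition $\PP$, every subinterval whose interior meets $K$ contains an entire deleted interval of generation $m$ for all sufficiently large $m$, and since $\PP$ is finite one $m$ works for all of them simultaneously. Tagging each such subinterval at a point where $f=\|g\|x_m$ --- the \emph{same} $x_m$ in every subinterval --- makes the relevant part of the Riemann sum a scalar multiple of the single unit vector $x_m$ with coefficient $\|g\|\sum_{j\in A}\mu(J_j)>\tfrac23\|g\|$, which dominates the contribution at most $\tfrac13\|g\|$ from the subintervals missing $K$; antisymmetry of $g$ forces the only candidate integral to be $0$, so integrability fails. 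Without this device your construction does not close.
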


The proof of the previous theorem together with Josefson-Nissenzweig Theorem (see \cite[Chapter XII]{diestel1984sequences}) gives the following corollary:

\begin{CORO}
\label{coroKadets}

Given an infinite-dimensional Banach space $X$, there always exists a weak* continuous function $f:[0,1] \rto X^\ast$ which is not  Riemann integrable.

\end{CORO}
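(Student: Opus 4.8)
The plan is to derive Corollary \ref{coroKadets} from Theorem \ref{teoKadets} by exhibiting, inside $X^\ast$, a subspace on which we can run Kadets' construction while keeping the resulting function $\w^\ast$-continuous. First I would invoke the Josefson--Nissenzweig Theorem: since $X$ is infinite-dimensional, there is a sequence $\Suc{x_n^\ast}{n}$ in $X^\ast$ with $\norm{x_n^\ast}=1$ for every $n$ and $x_n^\ast \xrightarrow{\w^\ast} 0$. The idea is that the closed linear span $Z:=\SPAN\{x_n^\ast : n\in\N\}$ is an infinite-dimensional (separable) Banach space, so by Theorem \ref{teoKadets} it either fails the Schur property — in which case there is a weakly continuous $g:[0,1]\rto Z$ that is not Riemann integrable — or it has the Schur property. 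I would handle the Schur case separately, because there the obstruction is precisely that weak continuity in $Z$ is too strong; instead one builds the bad function directly out of the $\w^\ast$-null sequence.

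**Next**, the crucial observation is that a function valued in $Z\sub X^\ast$ which is \emph{weakly} continuous is automatically \emph{weak$^\ast$} continuous, since the weak$^\ast$ topology is coarser than the weak topology on $X^\ast$; and Riemann integrability of an $X^\ast$-valued function is unaffected by whether we regard the codomain as $Z$ or as $X^\ast$, because the norm is the same and the criterion \cite{gordon1991} (quoted above) is purely metric. So in the non-Schur case we are done immediately: the function $g$ produced by Theorem \ref{teoKadets}, viewed as a map into $X^\ast$, is $\w^\ast$-continuous and not Riemann integrable. In the Schur case I would instead use the sequence $\Suc{x_n^\ast}{n}$ itself to define $f$ on $[0,1]$ — for instance, following the Alexiewicz--Orlicz / Kadets pattern, place scaled copies of the $x_n^\ast$ on a null set arranged so that Riemann sums over fine partitions do not converge in norm, while $\w^\ast$-continuity holds because $x_n^\ast \xrightarrow{\w^\ast} 0$ forces every $\langle f(t_k), x\rangle$ to behave continuously as $t_k\to t$. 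The point is that $\w^\ast$-continuity only needs to be tested against the fixed vectors $x\in X$, and $\w^\ast$-convergence of the building blocks to $0$ is exactly what makes the "spikes" invisible to each such $x$, whereas in norm the spikes have size bounded below by $1$ and wreck integrability.

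**The main obstacle** I anticipate is making the Schur-case construction genuinely non-Riemann-integrable rather than merely discontinuous: a $\w^\ast$-null sequence of unit vectors placed on a measure-zero set can still yield a Riemann integrable function with integral zero (as happens in several examples in Section 2), so one must be careful to arrange the spikes on a non-null set — or more precisely, on a set where oscillation of the Riemann sums persists. The cleanest route is to revisit Kadets' own argument and check that the only place the Schur property (or its failure) is used is to extract a weakly-but-not-norm null sequence; replacing "weakly null, not norm null" by "weak$^\ast$ null, not norm null" and asking only for $\w^\ast$-continuity of the output makes the argument go through unconditionally for $X^\ast$. I would therefore phrase the proof as: \textit{apply the proof of Theorem \ref{teoKadets} with the Josefson--Nissenzweig sequence in place of the weakly null sequence it would otherwise use}, and verify that (a) the construction still produces a function that is not Riemann integrable, by the same partition-oscillation estimate, and (b) the function is $\w^\ast$-continuous because each coordinate $t\mapsto\langle f(t),x\rangle$ inherits continuity from $\langle x_n^\ast,x\rangle\to 0$. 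This sidesteps the case distinction entirely and keeps the proof to a few lines.
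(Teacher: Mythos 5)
Your final paragraph is exactly the paper's argument: the stated proof is simply that the Josefson--Nissenzweig theorem supplies a norm-one $\w^\ast$-null sequence in $X^\ast$, and rerunning the construction from the proof of Theorem \ref{teoKadets} with that sequence (checking that the oscillation estimate only needs $\|x_n^\ast\|=1$ and that $\w^\ast$-continuity only needs $\w^\ast$-nullity) yields the desired function. The Schur/non-Schur case distinction in your first two paragraphs is therefore superfluous, as you yourself conclude, but nothing in it is wrong.
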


In \cite{Wang2000}, Wang and Yang extend the previous result to a general locally convex topology weaker than the norm topology.
In this section, we generalize these results in Theorem \ref{THEODP}.

Following the terminology used in \cite{EnfloGurariySeoane}, we say that a subset $M$ of a Banach space is spaceable if $M\cup \lbrace 0 \rbrace $ contains a closed infinite-dimensional subspace.

We start with the definitions of $\tau$-Dunford-Pettis operator and the $\tau$-Schur property, that coincide with the classical definitions of Dunford-Pettis or completely continuous operator and the Schur property when $\tau$ is the weak topology.

\begin{DEFI}

Let $X$ and $Y$ be Banach spaces and $\tau$ a locally convex topology on $X$ weaker than the norm topology. An operator $T:X \rto Y$ is said to be $\tau$-Dunford-Pettis ($\tau$-DP for short) if it carries bounded $\tau$-null sequences to norm null sequences.
A Banach space $X$ is said to have the $\tau$-Schur property if the identity operator $I: X \rto X$ is $\tau$-DP.

\end{DEFI}

\begin{THEO}

\label{THEODP}

Let $X$ and $Y$ be Banach spaces and $\tau$ be a locally convex topology on $X$ weaker than the norm topology.
If $T:X \rto Y$ is an operator which is not $\tau$-DP, 
then the family of all bounded $\tau$-continuous functions $f \colon [0,1]\rto X$ such that $Tf$ is not Riemann integrable is spaceable in $\ell_\infty([0,1], X)$, the space of all bounded functions from $[0,1]$ to $X$ with the supremum norm.

\end{THEO}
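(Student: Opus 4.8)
The plan is to reduce everything to a single ``bad sequence'' provided by the failure of the $\tau$-Dunford-Pettis property, and then to manufacture, from one carefully chosen non-Riemann-integrable function, a whole closed infinite-dimensional subspace of them. Since $T$ is not $\tau$-DP, there is a bounded sequence $(x_k)_k$ in $X$ with $x_k \to 0$ in $\tau$ but $\|Tx_k\| \geq \eta > 0$ for all $k$. After normalizing we may assume $\|x_k\| \leq 1$. The first step is to build a \emph{single} bounded $\tau$-continuous function $g\colon[0,1]\to X$ with $Tg$ not Riemann integrable: this is essentially the construction behind Theorem \ref{teoKadets} and its extension by Wang--Yang, so I would follow that scheme. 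Concretely, one places scaled copies of the vectors $x_k$ on a sequence of disjoint subintervals whose lengths are chosen so that the Riemann sums of $Tg$ oscillate by at least a fixed amount no matter how fine the partition is (the key point being that $\tau$-continuity is cheap to arrange by interpolating linearly down to $0$ between the ``plateaus'', while $T$ destroys the $\tau$-smallness and keeps the norm oscillation visible). One must check $\tau$-continuity: since $\tau$ is locally convex and weaker than the norm topology, continuity at the accumulation points of the plateau intervals follows from $x_k\to 0$ in $\tau$, and elsewhere $g$ is piecewise affine hence norm-continuous.

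The second step is the spaceability upgrade. Partition $[0,1]$ (or rather an auxiliary index set) into infinitely many pairwise disjoint subintervals $(U_m)_{m\in\N}$, and on each $U_m$ place a rescaled and translated copy $g_m$ of the function $g$ from Step 1, supported in $U_m$ and zero outside. Each $g_m$ is bounded and $\tau$-continuous, and $Tg_m$ is not Riemann integrable because restricted to $U_m$ it is an affine reparametrization of $Tg$. Now consider the closed linear span $Z = \overline{\spn}\{g_m : m\in\N\}$ in $\ell_\infty([0,1],X)$. Because the $g_m$ have disjoint supports, $Z$ is isometrically isomorphic to $c_0$ (the $g_m$ normalized form a ``$c_0$-basis'': $\|\sum a_m g_m\|_\infty = \sup_m |a_m|\,\|g_m\|_\infty$, and a bounded function must have coefficients tending to $0$), so $Z$ is a closed infinite-dimensional subspace. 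Every nonzero element $h=\sum_m a_m g_m \in Z$ is bounded and $\tau$-continuous (a uniform limit of $\tau$-continuous functions is $\tau$-continuous, since $\tau$ is given by seminorms and one can estimate each seminorm of $h(t)-h(t')$ uniformly), and for the least index $m_0$ with $a_{m_0}\neq 0$, the function $Th$ restricted to $U_{m_0}$ equals $a_{m_0}Tg_{m_0}$, which is not Riemann integrable; hence $Th$ is not Riemann integrable on $[0,1]$. This shows the target family together with $\{0\}$ contains the closed infinite-dimensional subspace $Z$, which is exactly spaceability.

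The main obstacle is Step 1: producing the single non-Riemann-integrable function $Tg$ with $g$ only $\tau$-continuous, rather than weakly continuous. The subtlety is that the Riemann non-integrability must be detected by the \emph{norm} (via Gordon's criterion, Theorem stated in the preliminaries: one needs two tagged partitions with the same intervals whose Riemann sums differ by at least a fixed $\varepsilon$ in norm, for every partition), while the $\tau$-continuity constraint only controls $g$ in the weak sense $\tau$. The resolution, following Kadets and Wang--Yang, is that the sequence $(x_k)$ itself provides the split: $x_k \to 0$ in $\tau$ handles continuity, and $\|Tx_k\|\geq\eta$ handles the oscillation of the Riemann sums of $Tf$. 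One arranges the plateaus so that on any partition of norm $<\delta$, some interval straddles a plateau carrying a large copy of some $x_k$, and by toggling the tag between the plateau and an adjacent zero region one changes the Riemann sum of $Tf$ by a term of norm bounded below independently of $\delta$. I would also double-check the bookkeeping that guarantees $g$ is bounded (the scaling factors on the plateaus stay bounded) and that the disjoint-support rescalings in Step 2 preserve both boundedness and $\tau$-continuity, but these are routine once the $c_0$-structure is in place.
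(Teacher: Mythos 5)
Your spaceability step (Step 2) is a genuinely different route from the paper's, and it is correct: the paper does not build one bad function and then take disjoint copies, but instead fixes a single fat Cantor set $K\subset[0,1]$ with $\mu(K)>\frac{2}{3}$, with removed intervals $I_k^n$, and maps each $g$ in the space $\mathcal{C}_a([0,1])$ of continuous functions antisymmetric about $x=\frac{1}{2}$ with $g(0)=g(1)=0$ to the function $f_g$ equal to a rescaled copy of $g$ times $x_n$ on each $I_k^n$ and to $0$ on $K$; this map is a multiple of an isometry, so its image is already a closed infinite-dimensional subspace (a copy of $\mathcal{C}_a([0,1])$ rather than of $c_0$), and every nonzero $f_g$ is simultaneously $\tau$-continuous with $Tf_g$ non-integrable. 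Your disjoint-support $c_0$ upgrade, the uniform-limit argument for $\tau$-continuity (valid because every $\tau$-continuous seminorm is dominated by a multiple of the norm) and the restriction argument for non-integrability are all fine.

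The genuine gap is in Step 1, which you defer to Kadets and Wang--Yang but whose mechanism you describe incorrectly. You say that for a partition of norm $<\delta$ ``some interval straddles a plateau'' and that toggling its tag changes the Riemann sum by an amount bounded below independently of $\delta$. A single interval of length $<\delta$ contributes at most $\delta\sup_t\|Tf(t)\|$, so one toggle changes the sum by $O(\delta)$; and if the plateaus sit on a sequence of disjoint intervals accumulating at a point, the intervals of a fine partition that meet plateaus have small total measure and $Tf$ would in fact be Riemann integrable. Two further ingredients are indispensable. First, the set of toggleable points must have measure bounded below uniformly over all partitions; this is exactly why the construction places the bumps on the complementary intervals of a Cantor set of measure $>\frac{2}{3}$, so that the intervals of any partition whose interiors meet $K$ have total length $>\frac{2}{3}$ and each of them contains, for one common level $m$, a whole bump carrying $x_m$ as well as points where the function vanishes. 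Second, the toggles must be coherent: all straddling intervals must be retagged to peaks carrying the \emph{same} vector $x_m$, so the changes add up to $\bigl(\sum_{j}\mu(J_j)\bigr)\|g\|\,Tx_m$, whose norm is at least $\frac{2}{3}\|g\|$; with different vectors $x_{k_j}$ on different intervals the sum $\sum_j \mu(J_j)Tx_{k_j}$ could be small even though each $\|Tx_{k_j}\|=1$. Finally one must either pin down the only candidate value of the integral (the paper forces it to be $0$ via the antisymmetry of $g$) or exhibit two taggings of the same partition whose Riemann sums differ by a fixed amount. None of this appears in your sketch, and without it Step 1 does not go through.
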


\begin{proof}

The proof uses ideas from \cite{Kadets1994}.
Since $T$ is not $\tau$-DP, we can take a bounded sequence $\Suc{x_n}{n}$ that is $\tau$-convergent to zero such that $\| Tx_n \| =1$ for all $n \in \N$.

Let $K \subset [0,1]$ be a copy of the Cantor set constructed by removing from $[0,1]$ an open interval $I_1^1$ in the middle of $[0,1]$ and removing open intervals $I_1^n , I_2^n,\dots I_{2^n}^n$ from the middles of the remaining intervals in each step.
Suppose that the removed intervals are so small that $\mu(K)> \frac{2}{3}$.
Let $\mathcal{C}_a([0,1])$ be the closed subspace of $\CC ([0,1])$ consisting of all continuous functions $g\colon [0,1] \rto \R$ antisymmetric with respect to the axe $x=\frac{1}{2}$ and with $g(0)=g(1)=0$.
For every $g \in \mathcal{C}_a([0,1])$ and every open interval $I=(a,b)$ in $[0,1]$, we define the functions $g_I \colon [0,1] \rto \R $ and $f_g \colon [0,1] \rto X$ as follows 
$$ g_I (t) = \begin{cases}
\ 0 & \mbox{ if } t  \notin (a,b),\\
\ g(\frac{t-a}{b-a}) & \mbox{ if } t\in [a,b].
\end{cases}
$$
$$ f_g (t) = \begin{cases}
\ 0 & \mbox{ if } t  \in K,\\
\ g_{I_k^n}(t) x_n  & \mbox{ if } t\in I_k^n.
\end{cases}
$$

The function $\phi \colon \CC_a([0,1]) \rto \ell_\infty ([0,1], X)$ given by the formula $\phi (g) :=f_g$ for every $g \in \CC_a ([0,1])$ is a linear map and satisfies $\| \phi (g) \| = (\sup_n \| x_n\| ) \|g \|$ for every $g \in \CC_a([0,1])$. Therefore, $\phi$ is a multiple of an isometry. 
Thus, $V:= \phi ( \CC_a([0,1])$ is an infinite-dimensional closed subspace of $\ell_\infty ([0,1], X)$.

We are going to check that each function $f_g \neq 0$ is $\tau$-continuous but $Tf_g$ is not Riemann integrable.
Since $g$ is continuous, $g(0)=g(1)=0$ and $x_n \xrightarrow{ \tau} 0$, $f_g$ is $\tau$-continuous.
Suppose $Tf_g$ is Riemann integrable. Then,
 $$ y^\ast \bigg( \int_{0}^1  Tf_g (t) dt \bigg) =\int_{0}^1 y^\ast Tf_g (t) dt= \sum \limits_{k,n}  y^\ast(Tx_n) \int_{I_k^n} g_{I_k^n} (t) dt =  0 $$ for each $y^\ast \in Y^\ast$.
 The only possible value for the Riemann integral of $Tf_g$ is $0$ due to the above equality.
Choose a partition $\mathcal{P}= \lbrace J_1, J_2, \dots, J_N \rbrace$ of $[0,1]$. Let $A= \lbrace j : 1 \leq j \leq N , \hspace{2mm} \Int{J_j} \cap K \neq \emptyset \rbrace $.
We can take $m \in \N$ such that if $j \in A$ then $J_j$ contains some interval $I_k^m$.
Hence, if $j \in A$, there is $t_j \in J_j$ such that $f_g(t_j)=\| g \| x_m $. If $j \notin A$,  then we pick any $t_j \in \Int J_j$. From the inequality $ \sum _{j \in A} \mu (J_j) \geq \mu(K) > \frac{2}{3}$, we deduce
$$ \norm[\bigg]{ \sum \limits_{j=1}^N \mu( J_j) Tf_g(t_j)  } = \norm[\bigg]{ \sum \limits_{j \in A} \mu( J_j) Tf_g(t_j) + \sum \limits_{j \notin A}  \mu( J_j) Tf_g(t_j) } \geq $$ $$ \geq \norm[\bigg]{  \sum \limits_{j \in A} \|g\| \mu( J_j) Tx_m  } - \norm[\bigg]{ \sum \limits_{j \notin A} \mu( J_j) Tf_g(t_j) } > \frac{2}{3} \|g\| - \frac{1}{3} \sup_{t \in [0,1]} \| Tf_g(t) \| =  \frac{1}{3} \| g \| .$$

Then, $Tf_g$ is Riemann integrable if and only if $g =0$ if and only if $f_g =0$. \end{proof}

The next corollary gives an affirmative answer to a question posed by Sofi in \cite{pre06123790}.

\begin{CORO}

Given an infinite-dimensional Banach space $X$, the set of all weak* continuous functions $f:[0,1] \rto X^\ast$ which are not  Riemann integrable is spaceable in $\ell_\infty ([0,1], X^\ast)$.
\end{CORO}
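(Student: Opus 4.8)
The plan is to deduce the corollary from Theorem \ref{THEODP} by choosing its ingredients appropriately. I would take the ambient Banach space in the statement of Theorem \ref{THEODP} to be $X^\ast$ itself, the target space to be $X^\ast$ as well, the locally convex topology $\tau$ to be the weak$^\ast$ topology $\ws$ on $X^\ast$ (which is locally convex and, since $X$ is infinite-dimensional, strictly weaker than the norm topology), and the operator $T \colon X^\ast \rto X^\ast$ to be the identity $1_{X^\ast}$.

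The only hypothesis to verify is that $1_{X^\ast}$ is not $\ws$-Dunford--Pettis, i.e.\ that there exists a bounded weak$^\ast$-null sequence in $X^\ast$ that is not norm-null. This is exactly the Josefson--Nissenzweig theorem (see \cite[Chapter XII]{diestel1984sequences}): in every infinite-dimensional Banach space $X$ there is a sequence $\Suc{x_n^\ast}{n}$ in $X^\ast$ with $\|x_n^\ast\| = 1$ for all $n$ and $x_n^\ast \xrightarrow{\ws} 0$. Then $\|1_{X^\ast}(x_n^\ast)\| = 1 \not\to 0$, so $1_{X^\ast}$ fails to be $\ws$-DP.

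Applying Theorem \ref{THEODP} with these choices yields that the family of all bounded $\ws$-continuous functions $f \colon [0,1] \rto X^\ast$ for which $1_{X^\ast} \circ f = f$ is not Riemann integrable is spaceable in $\ell_\infty([0,1], X^\ast)$, which is precisely the assertion. I do not expect any real obstacle here: the entire construction has been packaged into Theorem \ref{THEODP}, and the corollary is a direct specialization, the only external input being the Josefson--Nissenzweig theorem. (Alternatively, one could feed the Josefson--Nissenzweig sequence directly into the construction in the proof of Theorem \ref{THEODP}, building the functions $f_g$ from the $x_n^\ast$ and the bump functions $g_{I_k^n}$, but invoking the theorem as a black box is cleaner.)
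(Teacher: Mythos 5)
Your proposal is correct and is essentially the paper's own argument: the paper likewise applies Theorem \ref{THEODP} to the identity on $X^\ast$ with $\tau$ the weak$^\ast$ topology, noting that $X^\ast$ fails the $\ws$-Schur property by the Josefson--Nissenzweig theorem. Your write-up only spells out in more detail what ``not $\ws$-Schur'' means.
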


\begin{proof}

$X^\ast$ is not $\w ^\ast$-Schur for any infinite-dimensional Banach space $X$ due to the Josefson-Nissenzweig Theorem. Thus, the conclusion follows from Theorem \ref{THEODP}.
\end{proof}

Given a Banach space $X$, a function $f:[0,1]\rto X$ is said to be scalarly Riemann integrable if every composition $x^\ast f $ with $x^\ast \in X ^\ast$ is Riemann integrable.

We can also characterize Dunford-Pettis operators thanks to Theorem \ref{THEODP}.
The equivalence $(1) \Leftrightarrow (3)$ in the following corollary was mentioned without proof in \cite{PelczynskiRocha1980}.
\begin{CORO}
Let $X$ and $Y$ be Banach spaces and $T:X \rto Y$ be an operator. The following statements are equivalent:
\begin{enumerate}
\item $T$ is Dunford-Pettis.
\item $Tf$ is Riemann integrable for every $\w$-continuous function $f:[0,1]\rto X$.
\item $Tf$ is Riemann integrable for every scalarly Riemann integrable function $f:[0,1]\rto X$.
\end{enumerate}
\end{CORO}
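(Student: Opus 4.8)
The plan is to establish the equivalences through the cycle $(1)\Rightarrow(3)\Rightarrow(2)\Rightarrow(1)$; two of the three links are purely formal, and essentially all of the content lies in $(1)\Rightarrow(3)$.

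For $(1)\Rightarrow(3)$ I would argue by contradiction, using Gordon's criterion recalled at the beginning of the paper. Let $f\colon[0,1]\rto X$ be scalarly Riemann integrable. First note that $f$ is bounded, say $\|f(t)\|\le M$ for every $t$: the set $\{f(t):t\in[0,1]\}$ is weakly bounded, since each $x^\ast f$ is Riemann integrable and hence bounded, so it is norm bounded by the uniform boundedness principle. Suppose now that $Tf$ is not Riemann integrable. By Gordon's criterion there is $\varepsilon_0>0$ such that for every partition $\mathcal{P}$ of $[0,1]$ one can find tagged partitions $\mathcal{P}_1,\mathcal{P}_2$ having the same intervals as $\mathcal{P}$ with $\|Tf(\mathcal{P}_1)-Tf(\mathcal{P}_2)\|\ge\varepsilon_0$. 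Applying this to the partition $\mathcal{Q}_k$ of $[0,1]$ into $k$ equal subintervals yields tagged partitions $\mathcal{Q}_k^1,\mathcal{Q}_k^2$ with the same intervals as $\mathcal{Q}_k$ and $\|Tf(\mathcal{Q}_k^1)-Tf(\mathcal{Q}_k^2)\|\ge\varepsilon_0$; writing $a_i^k$ and $b_i^k$ for the tags of $\mathcal{Q}_k^1$ and $\mathcal{Q}_k^2$ on the $i$-th subinterval and setting
$$u_k:=\sum_{i=1}^{k}\frac{1}{k}\,\big(f(a_i^k)-f(b_i^k)\big)\in X,$$
we obtain $\|u_k\|\le 2M$, while $Tu_k=Tf(\mathcal{Q}_k^1)-Tf(\mathcal{Q}_k^2)$, so that $\|Tu_k\|\ge\varepsilon_0$ for all $k$. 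The decisive observation is that $(u_k)_k$ is weakly null: for each $x^\ast\in X^\ast$ we have $x^\ast(u_k)=x^\ast f(\mathcal{Q}_k^1)-x^\ast f(\mathcal{Q}_k^2)$, and since $x^\ast f$ is Riemann integrable while $\|\mathcal{Q}_k^1\|=\|\mathcal{Q}_k^2\|=1/k\to 0$, both Riemann sums converge to the common value $\int_0^1 x^\ast f(t)\,dt$, whence $x^\ast(u_k)\to 0$. Thus $(u_k)_k$ is a bounded weakly null sequence with $\|Tu_k\|\ge\varepsilon_0$ for every $k$, contradicting the assumption that $T$ is Dunford--Pettis. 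I expect this to be the one genuine obstacle: what one has to realize is that the ``error vectors'' produced by Gordon's criterion along partitions of vanishing mesh are automatically weakly null, precisely because scalar Riemann integrability is governed by the mesh of the partition and not merely by fixing its intervals.

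The implication $(3)\Rightarrow(2)$ is immediate: any weakly continuous $f\colon[0,1]\rto X$ is scalarly continuous, so each $x^\ast f$ is a continuous real-valued function on $[0,1]$ and therefore Riemann integrable; hence $f$ is scalarly Riemann integrable and $(3)$ gives that $Tf$ is Riemann integrable.

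Finally, for $(2)\Rightarrow(1)$ I would argue contrapositively. If $T$ is not Dunford--Pettis then it is not $\w$-Dunford--Pettis, where $\w$ denotes the weak topology of $X$, which is a locally convex topology weaker than the norm topology; Theorem~\ref{THEODP} (applied with $\tau=\w$) then provides a bounded $\w$-continuous function $f\colon[0,1]\rto X$ such that $Tf$ is not Riemann integrable, so that $(2)$ fails. This closes the cycle and establishes the equivalence.
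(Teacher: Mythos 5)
Your proposal is correct, and the skeleton is the same as the paper's: $(2)\Rightarrow(1)$ via Theorem \ref{THEODP} with $\tau=\w$, $(3)\Rightarrow(2)$ trivially, and all the content in $(1)\Rightarrow(3)$. For that last implication the two arguments rest on the identical key fact --- that differences of Riemann sums of a scalarly Riemann integrable function along tagged partitions of vanishing mesh form a bounded weakly null sequence --- but they package it differently. The paper argues directly: it fixes a sequence of tagged partitions $\Suc{\mathcal{P}_n}{n}$ with $\|\mathcal{P}_n\|\to 0$, observes that $\(f(\mathcal{P}_n)\)_n$ is weakly Cauchy, uses that a Dunford--Pettis operator sends weakly Cauchy sequences to norm-convergent ones to produce a candidate integral $y$, and checks that $y$ is independent of the chosen sequence. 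You instead argue by contradiction through Gordon's criterion, extracting from the uniform partitions a bounded weakly null sequence $\Suc{u_k}{k}$ with $\|Tu_k\|\geq\varepsilon_0$, which contradicts the definition of Dunford--Pettis applied verbatim. Your version has the small advantage of invoking only the definition of Dunford--Pettis on weakly null sequences (the paper's step from weakly Cauchy to norm convergent, while standard, is left implicit), and of not needing to exhibit the value of the integral; the paper's version is slightly more constructive in that it identifies $\int_0^1 Tf$ as the common norm limit of the Riemann sums. Your explicit remark that $f$ is bounded by the uniform boundedness principle is a detail the paper leaves unsaid, and it is correctly justified.
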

\begin{proof}
$(2) \Rightarrow (1)$ is a consequence of Theorem \ref{THEODP}. Since every $\w$-continuous function $f:[0,1]\rto X$ is scalarly Riemann integrable, $(3) $ implies $(2)$. Therefore, it remains to prove $(1) \Rightarrow (3)$.
Suppose $T$ is Dunford-Pettis and fix $\Suc{ \mathcal{P}_n}{n}$ a sequence of tagged partitions of $[0,1]$ with $\| \mathcal{P}_n \| \xrightarrow{n} 0$. 
Let $f:[0,1] \rto X$ be a scalarly Riemann integrable function. Then, $x^\ast f(\mathcal{P}_n) \xrightarrow{n} \int_0^1 x^\ast f(t)dt$ for every $x^\ast \in X^\ast$. Thus, $f(\mathcal{P}_n)$ is a $\w$-Cauchy sequence in $X$, so $Tf ( \mathcal{P}_n)$ is  norm convergent to some $y \in Y$.
The limit $y$ does not depend on the sequence of tagged partitions, since if $\Suc{ \mathcal{P}'_n}{n}$ is any other sequence of tagged partitions with $\| \mathcal{P}'_n \| \xrightarrow{n} 0$, then $f(\mathcal{P}_n) - f(\mathcal{P}'_n)$ is weakly null and this in turn implies that $\| Tf(\mathcal{P}_n) - Tf(\mathcal{P}'_n) \| \xrightarrow{n} 0$.
Thus, $Tf$ is Riemann integrable.
\end{proof}

\section*{Acknowledgements}
I would like to thank Antonio Avil\'{e}s and Jos\'{e} Rodr\'{i}guez the useful suggestions and the help provided in some proofs of this paper.

\nocite{PelczynskiRocha1980}
\nocite{MR2591586}
\inputencoding{latin1}
\bibliography{WLP}
\bibliographystyle{plain}

\end{document}